\newtheorem{thm}{Theorem}
\newtheorem{dfn}{Definition}
\newtheorem{rmk}{Remark}
\newcommand{\PreserveBackslash}[1]{\let\temp=\\#1\let\\=\temp}
\newcolumntype{C}[1]{>{\PreserveBackslash\centering}p{#1}}
\newcolumntype{R}[1]{>{\PreserveBackslash\raggedleft}p{#1}}
\newcolumntype{L}[1]{>{\PreserveBackslash\raggedright}p{#1}}
\newcommand{\mca}[1]{\multicolumn{1}{c}{\textbf{#1}}}
\newcommand{\mcb}[1]{\multicolumn{2}{c}{\textbf{#1}}}
\newcommand{\mcc}[1]{\multicolumn{3}{c}{\textbf{#1}}}
\newcommand{\vu}{{\vec{u}}}
\newcommand{\vv}{{\vec{v}}}
\newcommand{\vw}{{\vec{w}}}
\newcommand{\vx}{{\vec{x}}}
\newcommand{\vh}{{\vec{h}}}
\newcommand{\vk}{{\vec{k}}}
\newcommand{\vp}{{\vec{p}}}
\newcommand{\vq}{{\vec{q}}}
\newcommand{\bnabla}{{\overline{\nabla}}}
\renewcommand{\d}{{\mathrm{d}}}
\newcommand{\bd}{{\overline{\d}}}
\newcommand{\bG}{{\overline{G}}}
\newcommand{\R}{{\mathbb{R}}}
\newcommand{\up}[1]{$\uparrow$\hspace*{#1}}
\newcommand{\down}[1]{$\downarrow$\hspace*{#1}}
\newcommand{\upl}[1]{\hspace*{#1}$\uparrow$}
\newcommand{\downl}[1]{\hspace*{#1}$\downarrow$}
\newcommand{\score}[3]{$#1${\scriptsize$\pm#2$} ($\times10^{#3}$)}
\newcommand{\pp}{\raisebox{.2\height}{\scalebox{.8}{++}}}
\title{Deep Energy-Based Modeling of\\Discrete-Time Physics}
\author{%
  Takashi Matsubara\\
  Osaka University\\
  Osaka, Japan 560--8531\\
  \texttt{matsubara@sys.es.osaka-u.ac.jp} \\
  \And
  Ai Ishikawa \\
  Kobe University \\
  Kobe, Japan 657--8501\\
  \texttt{a-ishikawa@stu.kobe-u.ac.jp} \\
  \AND
  Takaharu Yaguchi \\
  Kobe University \\
  Kobe, Japan 657--8501\\
  \texttt{yaguchi@pearl.kobe-u.ac.jp} \\
}
\begin{document}

\maketitle

\begin{abstract}
    Physical phenomena in the real world are often described by energy-based modeling theories, such as Hamiltonian mechanics or the Landau theory, which yield various physical laws. Recent developments in neural networks have enabled the mimicking of the energy conservation law by learning the underlying continuous-time differential equations. However, this may not be possible in discrete time, which is often the case in practical learning and computation. Moreover, other physical laws have been overlooked in the previous neural network models. In this study, we propose a deep energy-based physical model that admits a specific differential geometric structure. From this structure, the conservation or dissipation law of energy and the mass conservation law follow naturally. To ensure the energetic behavior in discrete time, we also propose an automatic discrete differentiation algorithm that enables neural networks to employ the discrete gradient method.
\end{abstract}

\section{Introduction}
Deep neural networks have achieved significant results for a variety of real-world tasks such as image processing~\cite{He2015a,Zhu2017}, natural language processing~\cite{Devlin2018}, and game playing~\cite{Silver2017}.
Their successes depend on hard-coded prior knowledge, such as translation invariance in image recognition~\cite{LeCun1998} and the manifold hypothesis in data modeling~\cite{Rifai2011}.
The prior knowledge guarantees a desirable property of the learned function.
The Hamiltonian neural network (HNN)~\cite{Greydanus2019} implements the Hamiltonian structure 
on a neural network and thereby produces the energy conservation law in physics.
After its great success, neural networks specifically designed for physical phenomena have received much attention. They have been intensively extended to various forms, such as the Hamiltonian systems with additional dissipative terms~\cite{Zhong2020a}.

Meanwhile, most previous studies aimed to model continuous-time differential equations and employed numerical integrators (typically, an explicit Runge--Kutta method) to integrate the neural network models for learning and computing the dynamics~\cite{Chen2018e,Chen2020a,Greydanus2019,Zhong2020}.
Surprisingly, our numerical experiments reveal that a higher-order numerical integrator with adaptive time-stepping is quite often inferior in performance as compared to a quantitatively lower order but qualitatively superior numerical integrator.
This is because higher-order integrators aim to reproduce continuous-time dynamics while practical learning and computation are in discrete time.
In this case, the qualitative features that the integrators equipped with could be actually essential.

From this point of view, this study proposes a \textit{deep energy-based discrete-time physical model}, which combines neural networks and discrete-time energy-based modeling.
The key ingredient is the structure-preserving integrators, in particular, the discrete gradient method along with the newly-developed automatic discrete differentiation.
In addition, our framework unifies and also extends the aforementioned previous studies.
The main contributions include:

\paragraph*{Applicable to general energy-based physical models.}
Our framework is applicable to general physical phenomena modeled by the energy-based theory, such as Hamiltonian mechanics, the Landau theory, and the phase field modeling.
Our target class includes a Hamiltonian system composed of position and momentum (a so-called natural system, such as a mass-spring system), a natural system with friction, a physical system derived from free-energy minimization (e.g., phase transitions), and a Hamiltonian partial differential equation (PDE) (e.g., the Korteweg--de Vries (KdV) equation and the Maxwell equation).
All equations can be written as a geometric equation.
Most studies have focused on one of the first two systems~\cite{Greydanus2019,Zhong2020a,Zhong2020} under special conditions~\cite{Chen2020a,Saemundsson2020,Tong2020}, or they are too general to model the conservation and dissipation laws~\cite{Chen2018e,Raissi2018}.
The details of the proposed framework along with the target class of the equations and the geometric aspects are described in Section~\ref{sec:physical_systems}.

\begin{wrapfigure}{r}{0.57\textwidth}
    \vspace*{-7mm}
    \includegraphics[scale=0.4,page=1]{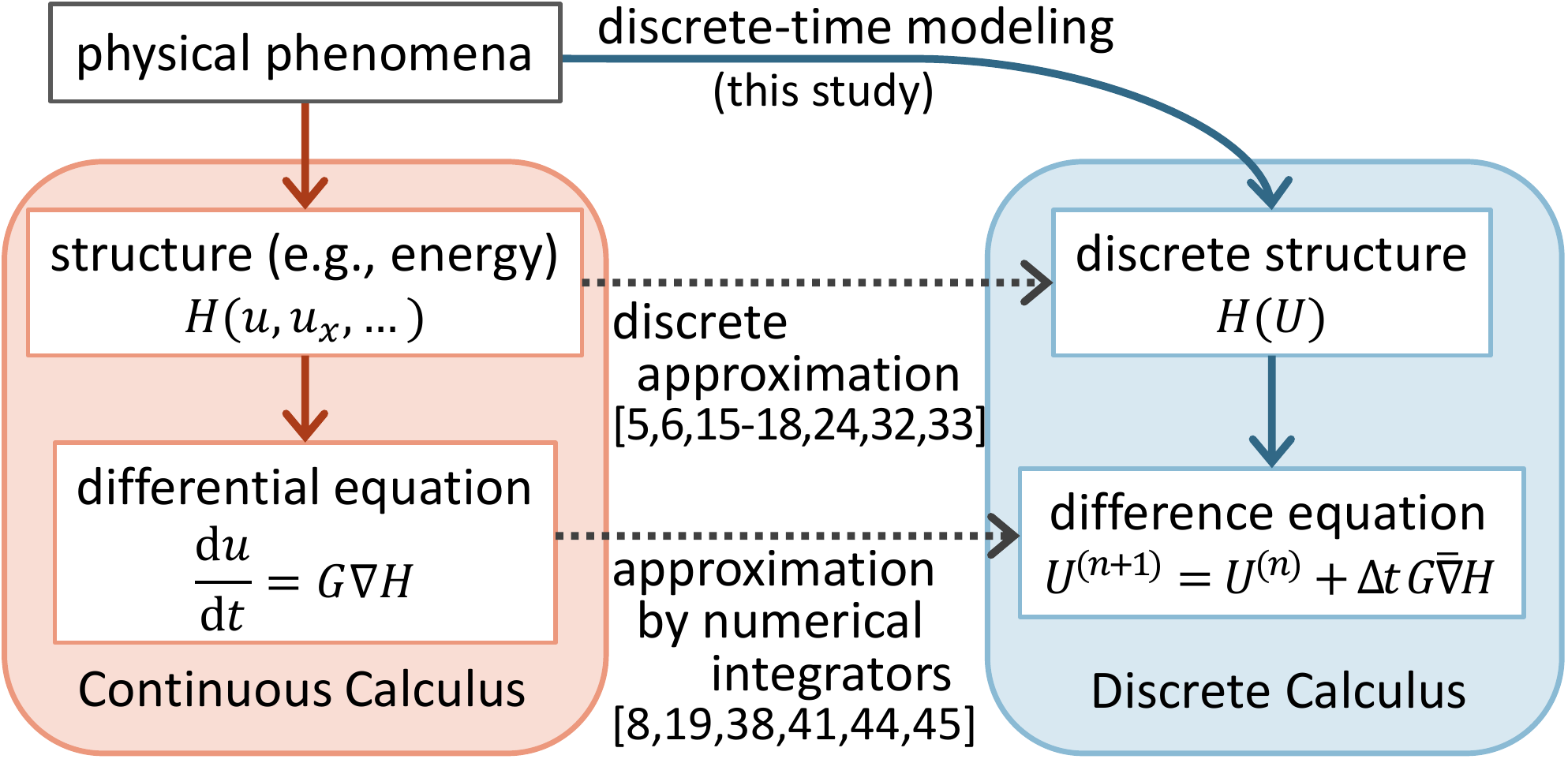}
    \vspace*{-5mm}
    \caption{Modeling based on energy-based theories.}\label{fig:modeling_concept}
    \vspace*{-3mm}
\end{wrapfigure}
\paragraph*{Equipping with the laws of physics in discrete time.}
Previous models interpolate the discrete-time data using numerical integrators for learning and computing~\cite{Chen2020a,Greydanus2019,Saemundsson2020,Tong2020,Zhong2020a,Zhong2020}. The discretization may destroy the geometrical structure from which the laws of physics follow (see the lower part of Fig.~\ref{fig:modeling_concept}).
Conversely, our approach, in principle, learns a discrete-time model from the discrete-time data without the time-consuming interpolation and discretization error (see the upper part).
Using the \emph{discrete gradient}, our approach admits the important laws of physics, particularly the energy conservation or dissipation law and the mass conservation law in discrete time.
We demonstrate this property theoretically in Section~\ref{sec:discrete_gradient} and experimentally in Section~\ref{sec:experiments}.

\paragraph*{Easy-to-use.}
Our approach is based on the discrete gradient method~\cite{Furihata1999,Gonzalez1996,Quispel1996}.
Most discrete gradients require the explicit form of the function (see the middle part of Fig.~\ref{fig:modeling_concept}); hence, they are unavailable for neural networks (see Appendix~\ref{appendix:GI} for reference).
We propose an automatic discrete differentiation algorithm, which automatically obtains the discrete gradient of the neural networks composed of linear and nonlinear operations.
The proposed algorithm can be implemented in a similar way to the current automatic differentiation algorithm~\cite{Griewank2008}; we provide it as a PyTorch library~\cite{Paszke2017}%
\footnote{\url{https://github.com/tksmatsubara/discrete-autograd}}.
We introduce the detailed algorithm in Section~\ref{sec:add}.

\section{Related Work}
\paragraph{Neural Networks for Differential Equations.}
Since the 1990s, many studies have attempted to approximate ordinary differential equations (ODEs) and PDEs by applying neural networks ~\cite{Anastassi2014,Cichock1992,Lagaris1998,Raissi2018,Ramuhalli2005,Rudd2014}.
Recent advances in the automatic differentiation algorithm~\cite{Griewank2008} have enabled us to build more complicated neural network architectures.
Neural ODE (NODE)~\cite{Chen2018e} has re-established neural networks for modeling ODEs.
NODE treats the output of a time-dependent neural network as the time derivative of the input; thereby, defining an ODE in a general way.
Moreover, NODE employs numerical integrators to train and integrate the neural network model.
Several studies attempted to model a PDE system using regularization terms to mimic the conservation laws~\cite{Raissi2018,Wu2020a}.
They were insufficient to ensure the conservation laws in physical systems.

The HNN approximates an energy function $H$ from the data using a neural network, and thereby, builds a Hamiltonian system~\cite{Greydanus2019}.
The time-derivative of the states $(\vq,\vp)$ is given using the gradient $\nabla H$ of the energy $H$, which is called the Hamiltonian, specifically, $\d\vq/\d t=\nabla_\vp H$ and $\d\vp/\d t=-\nabla_\vq H$, where $\vq$ and $\vp$ denote the position and momentum, respectively.
Following the HNN, the symplectic ODE-Net integrates an HNN-like model using a Runge--Kutta method; thus, enabling learning from the discrete-time data~\cite{Zhong2020}.
The dissipative SymODEN generalized it to a model with friction and input~\cite{Zhong2020a}.
We summarized the previous studies in Table~\ref{tab:comparison}.

\begin{table}[t]
    \centering\small
    \caption{Comparison with Other Studies}\label{tab:comparison}
    \begin{tabular}{lcccccccc}
        \toprule
        \textbf{}                                & HNN                  & SymODEN          & Dissipative       & SRNN/VIN                                  & DGNet        \\
        \textbf{}                                & \cite{Greydanus2019} & \cite{Zhong2020} & \cite{Zhong2020a} & \cite{Chen2020a,Saemundsson2020,Tong2020} & (this paper) \\
        \midrule
        Hamiltonian system                       & yes                  & yes              & yes               & yes                                       & yes          \\
        Dissipative ODE                          &                      &                  & yes               &                                           & yes          \\
        \midrule
        Hamiltonian PDE                          &                      &                  &                   &                                           & yes          \\
        Dissipative PDE                          &                      &                  &                   &                                           & yes          \\
        \midrule
        Learning from finite difference          &                      & approx.$^*$      & approx.$^*$       & approx.$^*$                               & yes          \\
        Strict conservation law in discrete-time &                      &                  &                   & approx.$^{**}$                            & yes          \\
        Strict dissipation law in discrete-time  &                      &                  &                   &                                           & yes          \\
        \bottomrule
        \multicolumn{6}{l}{$^*$ Interpolating by numerical integrators. $^{**}$ Conserving only the ``shadow'' Hamiltonian.}                                              \\
    \end{tabular}
    \vspace*{-3mm}
\end{table}

\paragraph{Structure-Preserving Numerical Methods.}
Most differential equations that arise as models for physical phenomena admit some laws of physics, e.g., the energy and other conservation laws of the Hamilton equation and the mass conservation law and energy dissipation properties of the equations for phase-transition phenomena.
Numerical integrators that reproduce those properties are called structure-preserving integrators or geometric integrators~\cite{Hairer2006}.

The aforementioned studies mainly employed classical Runge--Kutta methods for numerical integration, which in general destroy these properties~\cite{Hairer2006}.
Several recent studies have employed symplectic integrators, which conserve an approximated energy called a ``shadow'' Hamiltonian in discrete time~\cite{Chen2020a,Saemundsson2020,Tong2020}.
These studies considered only the systems of which the Hamiltonian $H$ is separable, i.e., expressible as the sum of the potential and kinetic energies.
This is quite restrictive; in fact, most of the important Hamiltonian PDEs (e.g., the shallow water equations and the nonlinear Schr\"odinger equation) are not in this class.
Moreover, structure-preserving integrators for dissipative systems have never been employed.
This is because these integrators are often based on the discrete gradient method; however, no efficient discrete gradient has been available for neural networks.

Several studies have focused on Lagrangian mechanics~\cite{Cranmer2020,Saemundsson2020}.
Lagrangian mechanics can be expressed using the time derivative of the position, while the Hamiltonian mechanics requires conjugate momentum.
The main drawback is that it is not obviously extendable to general dissipative systems.
We consider it out of scope of this study, but the proposed method is extendable to it~\cite{Yaguchi2013}.

\section{Methods}\label{sec:method}
\subsection{General Form of Energy-Based Dynamical Systems}\label{sec:physical_systems}
We focus on the following formulation of the models by the energy-based theories, which expresses a wide variety of physical systems described by ODEs and discretized PDEs~\cite{Furihata1999, Quispel1996a}.
The system has a state $\vu\in\R^N$ and an energy function $H:\R^N\rightarrow \R$.
The time evolution is expressed as 
\begin{equation}
    \textstyle\frac{\d \vu}{\d t}=G(\vu) \nabla H(\vu),\label{eq:gradient_flow}
\end{equation}
where $G\in\R^N\times\R^N$ is a matrix, which can be state-dependent, and $\nabla H(\vu)$ is the gradient of the system energy $H$ with respect to the state $\vu$.
Systems of this form arise as differential geometric equations on Riemannian or symplectic manifolds.
See Appendix \ref{appendix:geometry} for reference.
The system $(H,G,\vu)$ has the following laws of physics.
\begin{thm}\label{thm:continuous_conservation_dissipation}
    The system has the energy dissipation law if $G\leq O$ and the energy conservation law if $G$ is skew-symmetric.
\end{thm}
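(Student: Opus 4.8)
The plan is to compute the time derivative of the energy along a trajectory of \eqref{eq:gradient_flow} and read off both claims from the algebraic structure of $G$. Concretely, let $\vu(t)$ solve $\frac{\d \vu}{\d t}=G(\vu)\nabla H(\vu)$. By the chain rule,
\[
    \frac{\d}{\d t} H(\vu(t)) = \nabla H(\vu(t))^\top \frac{\d \vu}{\d t} = \nabla H(\vu(t))^\top G(\vu(t))\,\nabla H(\vu(t)),
\]
so the instantaneous rate of change of energy is exactly the quadratic form of $G$ evaluated at the vector $\nabla H(\vu)$. Everything then reduces to sign properties of $\vx^\top G(\vu)\vx$.

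For the dissipation statement, $G\leq O$ means $G(\vu)$ is negative semidefinite at every state $\vu$ (equivalently, its symmetric part is, since $\vx^\top G\vx = \vx^\top\tfrac12(G+G^\top)\vx$). Substituting $\vx=\nabla H(\vu(t))$ gives $\frac{\d}{\d t}H(\vu(t))\leq 0$, i.e.\ the energy is nonincreasing along solutions. For the conservation statement, if $G$ is skew-symmetric then $\vx^\top G\vx = (\vx^\top G\vx)^\top = \vx^\top G^\top\vx = -\vx^\top G\vx$ for every $\vx$, so this scalar vanishes identically; hence $\frac{\d}{\d t}H(\vu(t))=0$ and $H$ is constant along solutions.

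There is no real obstacle here: the argument is a one-line chain-rule computation followed by two elementary facts about quadratic forms. The only points needing a little care are stating precisely what $G\leq O$ abbreviates (pointwise negative semidefiniteness, interpreted through the symmetric part when $G$ is not assumed symmetric) and noting the mild regularity assumptions — $H\in C^1$ and $G$ continuous — required to differentiate $H\circ\vu$ and to have local solutions; both are part of the standing setup. It is worth remarking that the skew-symmetric case is the genuinely geometric one (the coordinate form of a Poisson or symplectic structure), which is precisely the cancellation that must be reproduced in discrete time by the discrete-gradient construction in Section~\ref{sec:discrete_gradient}.
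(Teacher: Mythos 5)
Your proposal is correct and follows exactly the paper's own argument: differentiate $H$ along a trajectory via the chain rule to obtain $\frac{\d H}{\d t}=\nabla H^\top G\,\nabla H$, then conclude from negative semidefiniteness (dissipation) or skew-symmetry (conservation) of the quadratic form. The extra remarks on the symmetric part and regularity are fine but not needed beyond what the paper states.
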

See Appendix~\ref{appendix:proofs} for the proofs of the theorems for this study.
Note that $G\le O$ denotes that the matrix $G$ is negative semi-definite, with which $\vx^\top G \vx\le 0$ for any vector $\vx$.
A matrix $G$ is skew-symmetric if $G^\top = - G$, and then $\vx^\top G \vx=0$ for any vector $\vx$.

\begin{thm}\label{thm:mass_conservation}
    The system has the mass conservation law in the sense that $\d (\sum_k u_k) /\d t = 0$ if the vector $\vec{1} = (1,1,\ldots,1)$ is in the left kernel of $G$ (i.e., $\vec{1} G = \vec{0}$).
\end{thm}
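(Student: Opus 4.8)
The plan is to differentiate the total mass $\sum_k u_k$ directly along solutions of \eqref{eq:gradient_flow} and exploit the left-kernel hypothesis. Writing $\vec{1}=(1,1,\ldots,1)$ as a row vector, the total mass is $\sum_k u_k = \vec{1}\,\vu$, so by linearity of the time derivative,
\[
    \frac{\d}{\d t}\sum_k u_k = \vec{1}\,\frac{\d \vu}{\d t}.
\]

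First I would substitute the evolution equation \eqref{eq:gradient_flow} to obtain $\vec{1}\,\frac{\d\vu}{\d t} = \vec{1}\bigl(G(\vu)\nabla H(\vu)\bigr)$, and then regroup this by associativity of matrix multiplication as $\bigl(\vec{1}\,G(\vu)\bigr)\nabla H(\vu)$. At each fixed time $t$ the hypothesis $\vec{1}\,G=\vec{0}$ applies to the matrix $G(\vu(t))$; this is the only point where the (possible) state-dependence of $G$ needs a word of care, since the kernel identity must be read as holding for every value of the state, which is precisely what ``$\vec{1}$ is in the left kernel of $G$'' means here. Hence $\vec{1}\,G(\vu(t))=\vec{0}$ for all $t$, the right-hand side vanishes identically, and $\frac{\d}{\d t}\sum_k u_k = 0$.

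There is essentially no serious obstacle: the argument is one application of associativity together with the kernel hypothesis, and it is insensitive to the particular form of $H$ and to whether $G$ is skew-symmetric or negative semi-definite, so it can be stated in parallel with Theorem~\ref{thm:continuous_conservation_dissipation}. The only things worth making explicit in the write-up are that the conservation is pointwise in time rather than merely averaged, and that the very same computation carries over to the discrete-time setting once $\d\vu/\d t$ is replaced by a difference quotient and $\nabla H$ by a discrete gradient, which motivates isolating the result in this form.
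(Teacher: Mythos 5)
Your proof is correct and is essentially identical to the paper's: both differentiate $\sum_k u_k = \vec{1}\,\vu$, substitute $\d\vu/\d t = G\nabla H$, and apply $\vec{1}\,G=\vec{0}$. Your extra remark about reading the kernel condition pointwise in the state for a state-dependent $G$ is a reasonable clarification but does not change the argument.
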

Thus, we can design the neural network models with the above laws of physics by defining the models $(H,G,\vu)$, where $G$ satisfies the required conditions for the laws of physics shown in the above theorems and $H$ is designed by a neural network.

\begin{rmk}
    The models $(H,G,\vu)$ with $H$ represented by neural networks widely extend the scope of the previous studies.
    In particular, the discretized-in-space PDEs (e.g., the KdV equation~\cite{KdV1895} and {\color{cyan} the} Cahn--Hilliard equation~\cite{Cahn1958}) have not been treated like this before.
    This is a significant contribution in this study.
\end{rmk}

A natural system is a Hamiltonian system associated to a Hamiltonian function $H$ that is the sum of the potential and kinetic energies.
This is expressed as the system $(H,G=S,\vu)$ for the matrix
\begin{equation}
    S =
    {\scriptsize\setlength\arraycolsep{1pt}
    \renewcommand{\arraystretch}{.4}
    \begin{pmatrix}
        O    & I_n \\
        -I_n & O
    \end{pmatrix}
    },\label{eq:hamiltonian_system}
\end{equation}
where $2n=N$ and $I_n$ denotes an $n$-dimensional identity matrix.
The first $n$ elements of the state $\vu$ denote the position $\vq$ and the remaining denotes the momentum $\vp$.
The matrix $S$ is skew-symmetric, and the system $(H,G=S,\vu)$ conserves the system energy $H$.
A pendulum, a mass-spring system, and N-body problems are expressible by this form.
Besides, the system $(H,G=S-R,\vu)$ expresses a natural system with friction when $S$ is the one shown above and
\begin{equation}
    R = \mathrm{diag}(0 \dots 0\ g_{1} \dots g_{n}), \label{eq:dissipative_system}
\end{equation}
where $g_k\ge 0$ is a friction term that dampens the momentum $p_k$; thus, dissipating the system energy $H$ because $(S-R)\le O$.
Most previous studies focused on these two types of systems~\cite{Chen2020a,Greydanus2019,Saemundsson2020,Zhong2020a,Zhong2020}.

From a geometric point of view, the matrix $G$ in the above form means that the systems are defined on cotangent bundles, while the following approach is formulated on general symplectic or Riemannian manifolds, enabling our method to handle the various PDE systems~\cite{Hairer2006}.
In fact, the formulation $(H, G, \vu)$ can express the discretized PDE systems.
For example, PDEs under the periodic boundary condition can be discretized by using the central difference operators, of which the matrix representations are as follows.
\begin{equation}
    D=\frac{1}{2\Delta x}
    {\scriptsize\setlength\arraycolsep{1pt}
        \renewcommand{\arraystretch}{.4}
        \begin{pmatrix}
            0  & 1 &        &    & -1 \\
            -1 & 0 & 1                \\
               &   & \ddots           \\
               &   & -1     & 0  & 1  \\
            1  &   &        & -1 & 0  \\
        \end{pmatrix}
    },\quad
    D_2=\frac{1}{(\Delta x)^2}
    {\scriptsize\setlength\arraycolsep{1pt}
        \renewcommand{\arraystretch}{.4}
        \begin{pmatrix}
            -2 & 1  &        &    & 1  \\
            1  & -2 & 1                \\
               &    & \ddots           \\
               &    & 1      & -2 & 1  \\
            1  &    &        & 1  & -2 \\
        \end{pmatrix}
    },\label{eq:first_second_order_difference}
\end{equation}
where $\Delta x$ is the space mesh size.
The matrices $D$ and $D_2$ 
represent first--order and second--order central differences, respectively.
The $k$-th element $u_k$ of the state $\vu$ corresponds to the mass at the position $x=k \Delta x$, and the systems $(H,G=D,\vu)$ and $(H,G=D_2,\vu)$ admit the mass conservation law.
For suitable discretization of general differential operators, see Appendix \ref{appendix:semi-discretization}.
The system $(H,G=D,\vu)$ is a Hamiltonian PDE, which includes the shallow water equations such as the KdV equation, the advection equation, and the Burgers equation~\cite{Burgers1948}.
The matrix $D$ is skew-symmetric; hence, the system $(H,G=D,\vu)$ conserves the energy $H$.
The system $(H,G=D_2,\vu)$ expresses a physical system derived from the Landau free-energy minimization including the Cahn--Hilliard equation and the phase--field model for the phase transitions and the pattern formulations.
The energy $H$ dissipates because $D_2 \leq O$.
Other target equations include the equations with complex state variables, such as the Schr\"odinger equation and the Ginzburg--Landau equation.
See \cite{Furihata1999} for details.


\subsection{Discrete Gradient for Energetic-Property-Preserving Integration}\label{sec:discrete_gradient}
The discrete gradient is defined as the following vector-valued function~\cite{Furihata1999,Gonzalez1996,Itoh1988,Quispel1996}.
\begin{dfn}\label{def:discrete_gradient}
    For $H:\R^N\rightarrow \R$, $\bnabla H:\R^N \times \R^N \to \R^N$ that satisfies the following conditions is called a discrete gradient of $H$:
    \begin{equation}\label{eq:discrete_gradient}
        \textstyle H(\vu) - H(\vv) = \bnabla H(\vu, \vv) \cdot (\vu - \vv), \quad \bnabla H(\vu, \vu) = \nabla H(\vu),
    \end{equation}
    where $\cdot$ denotes an inner product.
\end{dfn}
The first condition corresponds to the chain-rule $\d H(\Delta\vu;\vu) = \nabla H(\vu)\cdot\Delta\vu$ for the Fr\'echet derivative $\d H(\cdot;\vu)$ of $H$ at $\vu$ and an infinitesimal change $\Delta\vu$ of $\vu$.
The second condition verifies that the discrete gradient $\bnabla H$ is certainly an approximation of the gradient $\nabla H$.
The inner product is typically the standard Hermitian inner product for ODEs and the discrete $L^2$ inner product $\langle \vu, \vv \rangle_{L^2_{\mathrm{d}}} := \sum u_k v_k \Delta x$ for discretized PDEs.

With the discrete gradient $\bnabla H$, a discrete analogue of the system in Eq.~\eqref{eq:gradient_flow} is expressed as follows.
\begin{equation}
    \frac{\vu^{(n+1)} - \vu^{(n)}}{t^{(n+1)}-t^{(n)}}
    = \bG(\vu^{(n+1)}, \vu^{(n)})  \bnabla H(\vu^{(n+1)}, \vu^{(n)}),\label{eq:discrete_system}
\end{equation}
where $\vu^{(n)}$ denotes the state $\vu$ at time $t^{(n)}$.
The matrix $\bG$ is an approximation to $G$ that satisfies the conditions of Theorem \ref{thm:continuous_conservation_dissipation} and/or \ref{thm:mass_conservation} required by the target system.
\begin{thm}\label{thm:discrete_conservation_dissipation}
    The discrete system in Eq.~\eqref{eq:discrete_system} has the discrete energy dissipation law if $\bG \leq O$ and the discrete energy conservation law if $\bG$ is skew-symmetric.
    In particular, if the system is dissipative, the amount of energy dissipation is an approximation of that of the continuous system.
    The system has the discrete mass conservation law if the vector $\vec{1} = (1,1,\ldots,1)$ is in the left kernel of $\bG$.
\end{thm}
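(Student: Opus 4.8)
The plan is to mirror the proof of Theorem~\ref{thm:continuous_conservation_dissipation}, replacing the Leibniz/chain rule and the (skew-)symmetry argument used there by their exact discrete counterparts, which are precisely the two defining properties of the discrete gradient in Definition~\ref{def:discrete_gradient}. Throughout I write $\Delta t = t^{(n+1)}-t^{(n)}$ and abbreviate $\bnabla H := \bnabla H(\vu^{(n+1)},\vu^{(n)})$ and $\bG := \bG(\vu^{(n+1)},\vu^{(n)})$; I assume a solution $\vu^{(n+1)}$ of the implicit relation~\eqref{eq:discrete_system} exists, so that all the quantities below are well defined.

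The first and central step is the discrete energy identity. By the first condition in Eq.~\eqref{eq:discrete_gradient}, $H(\vu^{(n+1)}) - H(\vu^{(n)}) = \bnabla H \cdot (\vu^{(n+1)}-\vu^{(n)})$; substituting $\vu^{(n+1)}-\vu^{(n)} = \Delta t\,\bG\,\bnabla H$ from Eq.~\eqref{eq:discrete_system} gives $H(\vu^{(n+1)}) - H(\vu^{(n)}) = \Delta t\,\bigl(\bnabla H \cdot \bG\,\bnabla H\bigr)$. The sign/vanishing of the right-hand side is then read off exactly as in the continuous case: if $\bG \le O$ then $\bnabla H \cdot \bG\,\bnabla H \le 0$, so $H$ is non-increasing along the discrete orbit (discrete dissipation); if $\bG^\top = -\bG$ then $\bnabla H \cdot \bG\,\bnabla H = 0$, so $H(\vu^{(n+1)}) = H(\vu^{(n)})$ (discrete conservation). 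The only bookkeeping care needed is that for discretized PDEs the product $\cdot$ is the discrete $L^2$ product $\langle\cdot,\cdot\rangle_{L^2_{\mathrm d}}$ and the hypotheses "$\bG \le O$" and "skew-symmetric" are meant with respect to that product; since the weight $\Delta x>0$ this follows verbatim from the standard-product statements, and for complex-valued states one takes real parts as usual. For the approximation claim, divide the identity by $\Delta t$: the left-hand side $\bigl(H(\vu^{(n+1)})-H(\vu^{(n)})\bigr)/\Delta t$ is the forward-difference approximation of $\tfrac{\d}{\d t}H(\vu)$, while as $\vu^{(n+1)}\to\vu^{(n)}$ the right-hand side tends to $\nabla H(\vu)\cdot G(\vu)\nabla H(\vu)$ by the second condition $\bnabla H(\vu,\vu)=\nabla H(\vu)$ together with the consistency $\bG(\vu,\vu)=G(\vu)$ — that is, to the instantaneous dissipation rate of the continuous system in Theorem~\ref{thm:continuous_conservation_dissipation}. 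Hence the per-step energy change is a consistent approximation of $\int \nabla H\cdot G\,\nabla H\,\d t$, and I would state its order of accuracy in terms of the smoothness of $H$ and the choice of $\bG$. Mass conservation is the shortest: pairing Eq.~\eqref{eq:discrete_system} with $\vec 1$ and using $\vec 1\,\bG = \vec 0$ gives $\vec 1\cdot(\vu^{(n+1)}-\vu^{(n)}) = \Delta t\,(\vec 1\,\bG)\bnabla H = 0$, i.e. $\sum_k u_k^{(n+1)} = \sum_k u_k^{(n)}$ (equivalently with the $\Delta x$ normalization in the PDE case).

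I do not expect a serious obstacle here: the argument is the continuous-time computation with the infinitesimal identity $\d H = \nabla H\cdot\d\vu$ upgraded to the exact finite identity built into the discrete gradient. The genuinely delicate points are all matters of precision rather than difficulty — keeping the choice of inner product consistent across Definition~\ref{def:discrete_gradient}, Eq.~\eqref{eq:discrete_system}, and the definiteness hypotheses on $\bG$; handling complex-valued $\vu$ via real parts of the Hermitian product; and pinning down exactly in what sense (and to what order) the discrete dissipation quantity $H(\vu^{(n+1)})-H(\vu^{(n)})$ approximates the continuous dissipated energy.
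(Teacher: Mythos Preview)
Your proposal is correct and follows essentially the same approach as the paper: derive the discrete energy identity $\frac{H(\vu^{(n+1)})-H(\vu^{(n)})}{\Delta t} = \bnabla H^\top \bG\,\bnabla H$ from Definition~\ref{def:discrete_gradient} and Eq.~\eqref{eq:discrete_system}, read off the sign/vanishing from the hypothesis on $\bG$, invoke $\bnabla H(\vu,\vu)=\nabla H(\vu)$ for the approximation claim, and pair Eq.~\eqref{eq:discrete_system} with $\vec 1$ for mass conservation. Your additional remarks about the choice of inner product and complex states are more careful than the paper's own proof but do not change the argument.
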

A discrete gradient $\bnabla H$ is not uniquely determined; hence, several methods have been proposed so far~\cite{Celledoni2012}.
However, most methods are inapplicable to neural networks because they require a manual deformation of the system equation~\cite{Furihata1999}.
See Appendix \ref{appendix:GI} for details.

A conceptual comparison between discrete gradient methods and symplectic integrators~\cite{Chen2020a,Saemundsson2020,Zhong2020} is summarized in Appendix~\ref{appendix:comparison_with_symplectic}.

\subsection{Automatic Discrete Differentiation Algorithm}\label{sec:add}
To obtain a discrete gradient $\bnabla H$ of the neural networks, we propose the \emph{automatic discrete differentiation} algorithm as an extension of the automatic differentiation algorithm~\cite{Griewank2008}.
Preparatorily, we introduce a discrete differential $\bd H$, which is a discrete counterpart of the Fr\'echet derivative $\d H$~\cite{Celledoni2014};
\begin{dfn}\label{def:discrete_differential}
    A discrete differential $\bd H : \R^N\times\R^N\times\R^N\rightarrow\R^M$ of a function $H:\R^N\rightarrow\R^M$ is a function that satisfies the following conditions;
    \begin{equation}
        \bd H(a\vx;\vv,\vu)=a\bd H(\vx;\vv,\vu),\ H(\vv)-H(\vu)=\bd H(\vv-\vu;\vv,\vu),\ \bd H(\cdot;\vu,\vu)=\d H(\cdot;\vu),
    \end{equation}
    for a scalar value $a$ and the Fr\'echet derivative $\d H(\cdot;\vu)$ of $H$ at $\vu$.
\end{dfn}
For a discrete differential $\bd H$ of a function $H:\R^N\rightarrow\R$, there exists a discrete gradient $\bnabla H$ such that $\bnabla H(\vv,\vu)\cdot \vw=\bd H(\vw;\vv,\vu)$.
This relationship is obvious from Definitions~\ref{def:discrete_gradient} and \ref{def:discrete_differential}, and it is a discrete analogue of the chain-rule $\nabla H(\vu)\cdot \vw=\d H(\vw;\vu)$.

Our proposal is to obtain a discrete differential $\bd H$ of the neural network model $H$ using the automatic discrete differentiation algorithm, and thereby, a discrete gradient $\bnabla H$.
The automatic differentiation algorithm depends on the chain rule, product rule, and linearity.
For the functions $f:\R\rightarrow\R$ and $g:\R\rightarrow\R$, it holds that
\begin{equation}
    \textstyle \frac{\partial}{\partial x}(f\circ g)=\frac{\partial f}{\partial g}\frac{\partial g}{\partial x},\ \
    \frac{\partial}{\partial x}(fg)=g\frac{\partial f}{\partial x}+f\frac{\partial g}{\partial x},\ \
    \frac{\partial}{\partial x}(f+ g)=\frac{\partial f}{\partial x}+\frac{\partial g}{\partial x}.
    \label{eq:rules}
\end{equation}
\begin{thm}\label{thm:discrete_differential_rules}
    For any $x_1,x_2,\Delta x\in\R$ and functions $f:\R\rightarrow\R$ and $g:\R\rightarrow\R$, the chain-rule, product rule, and linearity for the discrete differential are respectively expressed as
    \begin{equation}
        \begin{split}
            \bd (f\circ g)(\Delta x;x_1,x_2)
            &= \bd f(\cdot;g(x_1),g(x_2))\circ \bd g(\Delta x;x_1,x_2),\\
            \bd (fg)(\Delta x;x_1,x_2) &\textstyle=\frac{g(x_1)+g(x_2)}{2}\bd f(\Delta x;x_1,x_2)+\frac{f(x_1)+f(x_2)}{2}\bd g(\Delta x;x_1,x_2),\\
            \bd (f+g)(\Delta x;x_1,x_2)&=\bd f(\Delta x;x_1,x_2)+\bd g(\Delta x;x_1,x_2).
        \end{split}
    \end{equation}
\end{thm}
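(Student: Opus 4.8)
The plan is to verify directly that each of the three proposed right-hand sides satisfies the three defining properties of a discrete differential from Definition~\ref{def:discrete_differential}: scalar homogeneity in the increment slot, exact reconstruction of the finite difference when the increment is taken to be $x_1-x_2$, and collapse to the ordinary Fréchet derivative on the diagonal $x_1=x_2$. Throughout I assume discrete differentials $\bd f$ and $\bd g$ of the factors are already available (for the elementary linear maps and coordinatewise nonlinearities that make up a network, they are). Since the classical chain rule, product rule, and linearity of the Fréchet derivative are known (Eq.~\eqref{eq:rules}), the diagonal case of each identity is immediate, so the real content is the homogeneity and reconstruction properties.

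For linearity, homogeneity is inherited termwise from $\bd f$ and $\bd g$, and reconstruction follows by adding the two reconstruction identities $f(x_1)-f(x_2)=\bd f(x_1-x_2;x_1,x_2)$ and $g(x_1)-g(x_2)=\bd g(x_1-x_2;x_1,x_2)$. For the product rule, homogeneity is again termwise; for reconstruction I would set $\Delta x=x_1-x_2$, use the reconstruction property of $\bd f$ and $\bd g$ to replace $\bd f(x_1-x_2;\cdot,\cdot)$ by $f(x_1)-f(x_2)$ and likewise for $g$, and then check the algebraic identity
\[
\tfrac{g(x_1)+g(x_2)}{2}\bigl(f(x_1)-f(x_2)\bigr)+\tfrac{f(x_1)+f(x_2)}{2}\bigl(g(x_1)-g(x_2)\bigr)=f(x_1)g(x_1)-f(x_2)g(x_2),
\]
the cross terms cancelling in pairs; this one-line expansion is the only genuine computation in the proof.

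The chain rule is the step needing the most care, since the outer discrete differential is evaluated not at $(x_1,x_2)$ but at the pushed-forward pair $(g(x_1),g(x_2))$, and the symbol $\circ$ there denotes composition of the two homogeneous maps $\Delta x\mapsto\bd g(\Delta x;x_1,x_2)$ and $y\mapsto\bd f(y;g(x_1),g(x_2))$. I would check (i) homogeneity, by pulling the scalar first through $\bd g$ and then through $\bd f$; (ii) reconstruction, via the chain of equalities $\bd f\bigl(\bd g(x_1-x_2;x_1,x_2);g(x_1),g(x_2)\bigr)=\bd f\bigl(g(x_1)-g(x_2);g(x_1),g(x_2)\bigr)=f(g(x_1))-f(g(x_2))$, invoking the reconstruction property of $\bd g$ and then of $\bd f$ in turn; and (iii) the diagonal case, which collapses to the ordinary chain rule. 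I expect the main obstacle to be purely expository rather than mathematical: in the vector-valued setting one must make precise that $\bd g(\Delta x;x_1,x_2)$ lands in the domain on which $\bd f(\cdot;g(x_1),g(x_2))$ acts, so that the composition is well defined; once the types are aligned, the three verifications go through exactly as written. Finally, I would remark that the same argument extends by induction to any finite composition of linear maps and coordinatewise nonlinearities, which is precisely what justifies the automatic discrete differentiation algorithm of Section~\ref{sec:add}.
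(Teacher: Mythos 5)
Your proposal is correct and follows essentially the same route as the paper: the only genuine computation in either argument is the cross-term cancellation identity $f(x_1)g(x_1)-f(x_2)g(x_2)=\frac{g(x_1)+g(x_2)}{2}(f(x_1)-f(x_2))+\frac{f(x_1)+f(x_2)}{2}(g(x_1)-g(x_2))$, which is exactly what the paper cites, while the chain rule and linearity follow from homogeneity in the increment slot and the definition. You simply spell out the verification of all three defining properties (homogeneity, reconstruction, diagonal collapse) that the paper leaves implicit, which is a more complete write-up of the same proof.
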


For any linear operations such as the fully-connected and convolution layers, a discrete differential is equal to the Fr\'echet derivative because of the linearity.
For an element-wise nonlinear activation function $f:\R\rightarrow\R$, we employed the following discrete differential~\cite{Gonzalez1996}.
\begin{equation}
    \bd f(\Delta x;x_1,x_2)=\begin{cases}
        \frac{f(x_1)-f(x_2)}{x_1-x_2}\Delta x & \mbox{if } x_1\neq x_2 \\
        \d f(\Delta x;\frac{x_1+x_2}{2})      & \mbox{otherwise}.
    \end{cases}\label{eq:discrete_differential_of_activation}
\end{equation}
The product rule is applicable to bilinear operations such as attention, graph convolution, transformer, and metric function~\cite{Devlin2018,Faghri2018}.

Given the above, we propose the automatic discrete differentiation algorithm.
With the algorithm, one can automatically obtain a discrete differential $\bd H$ of a neural network $H$ given two arguments, which is then converted to a discrete gradient $\bnabla H$.
The computational cost is no more than twice of the ordinary automatic differentiation.
The algorithm is applicable to any computational graph such as convolutional neural network~\cite{Greydanus2019} and graph neural network~\cite{Desai2020}, and thereby one can handle extended tasks or further improve the modeling accuracy.
For reference, we introduce the case with a neural network that is composed of a chain of functions in Algorithm~\ref{alg:add} in Appendix~\ref{app:add}.
We call a neural network obtaining a discrete gradient $\bnabla H$ by using the automatic discrete differentiation algorithm \emph{DGNet}, hereafter.

\subsection{Learning and Computation by the Discrete-Time Model}
Using DGNet, we propose a deep energy-based discrete-time physical model
that can learn from the discrete-time data directly as follows.
Given a time series, DGNet accepts two state vectors $u^{(n)}$ and $u^{(n+1)}$ at time steps $n$ and $n+1$, and then it outputs two scalar system energies $H(\vu^{(n)})$ and $H(\vu^{(n+1)})$.
The discrete gradient $\bnabla H(\vu^{(n+1)}, \vu^{(n)})$ is obtained by the automatic discrete differentiation algorithm. 
The model is trained to minimize the squared error between the left- and right-hand sides of Eq.~\eqref{eq:discrete_system};
\begin{equation}
    \textstyle\mbox{minimize\ }
    \sum_n
    \|
    \frac{\vu^{(n+1)} - \vu^{(n)}}{t^{(n+1)}-t^{(n)}}
    - \bG(\vu^{(n+1)}, \vu^{(n)}) \bnabla H(\vu^{(n+1)}, \vu^{(n)})\|_2^2\label{eq:discrete_differential_objective}
\end{equation}
Then, the error is back-propagated through the computational graphs including the neural network model, the discrete gradient, and the matrix $\bG$ by the ordinary automatic differentiation algorithm.
For training, the computational cost of the proposed scheme in Eq.~(6) is no more than twice of the HNN with the Euler method and typically tens times smaller than that with the adaptive Dormand--Prince method.
Through this learning process, DGNet potentially restores the true gradient $\nabla H$ from the sampled data because the discrete gradient $\bnabla H$ is equal to the true gradient $\nabla H$ when two arguments are equal by Definition~\ref{def:discrete_gradient}.

For a time-series prediction, DGNet predicts the next state implicitly by solving the implicit scheme in Eq.~\eqref{eq:discrete_system} and conserves the energy strictly.
The proposed discrete gradient $\bnabla H$ is time-symmetric, which implies that the proposed method is at least a second--order method~\cite{Quispel1996}.
Higher-order methods can be designed using the composition method (using multiple sub-steps) and the higher-order temporal difference (using multiple steps) as introduced in \cite{Furihata2010}.

Indeed, the training and prediction can be performed in a different manner.
After learning from the finite differences, DGNet provides the gradient $\nabla H$ so it is available for an explicit numerical method, which can be more computationally efficient and be preferable when the learned models are used in existing physics simulators (e.g., Matlab).
When the true time-derivative is known, DGNet can learn it as the previous models did.
Then, it can predict the next step using the discrete gradient implicitly while conserving energy.

\section{Learning of Partial and Ordinary Differential Equations}\label{sec:experiments}
\paragraph{Comparative Models.}
We examined the proposed DGNet and comparative methods.
NODE is a neural network that outputs the time-derivative of the states in a general way~\cite{Chen2018e}.
The HNN is a neural network where the output represents the system energy $H$, and its gradient with respect to the input state $\vu$ is used for the time-derivative~\cite{Greydanus2019}.
In our experiments, they were trained from a finite difference between two successive time steps using a numerical integrator, which is similar to some previous studies~\cite{Chen2020a,Saemundsson2020,Tong2020, Zhong2020a, Zhong2020}.
For numerical integrators, we employed the explicit midpoint method (RK2) and the Dormand--Prince method with adaptive time-stepping (ada.~DP); they are second-- and fourth--order explicit Runge--Kutta methods.
Then, the output error was back-propagated through all stages~\cite{Chen2018e}.
In terms of applying the HNN to the discretized PDEs, we generalized it by using the formulation in Section~\ref{sec:physical_systems} and denoted it as the HNN\pp.
DGNet was trained to minimize the objective in Eq.~\eqref{eq:discrete_differential_objective}; for simplicity, the matrix $G$ of the system was assumed to be known, and we used $\bG=G$. 
We also employed explicit numerical integrators for DGNet's prediction to reduce the computational cost from the implicit scheme in Eq.~\eqref{eq:discrete_system}.

\paragraph{Hamiltonian PDE.}
We evaluated the models on a Hamiltonian PDE, namely the KdV equation, which is a famous model that has soliton solutions~\cite{Furihata1999,Furihata2001}.
Of the discretized 1-dimensional KdV equation, the system energy $H$ and time evolution are expressed as follows.
\begin{equation}
    \textstyle H(\vu)=\Delta x\textstyle \sum_k (-\frac{1}{6}\alpha u_k^3-\frac{1}{2}\beta(D \vu)_k^2),\
    \textstyle \frac{\partial \vu}{\partial t}=\textstyle D\nabla H(\vu) = D(-\frac{1}{2}\alpha(\vu\odot\vu)+\beta(D_2 \vu)),\label{eq:kdv}
\end{equation}
where the subscript $_k$ denotes the $k$-th element, $D$ and $D_2$ denote the first-- and second--order central differences in Eq.~\eqref{eq:first_second_order_difference}, and $\odot$ denotes the element-wise multiplication.
The coefficients $\alpha$ and $\beta$ determine the spatio-temporal scales.
We set $\alpha=-6$, $\beta=1$, the spatial size to 10 space units, and the space mesh size $\Delta x$ to 0.2 .
At $t=0$, we set two solitons, each of which were expressed as $-\frac{12}{\alpha} \kappa^2 \mathrm{sech}^2(\kappa (x - d))$.
$\kappa$ denotes the size randomly drawn from $\mathcal U(0.5,2)$, and $d$ denotes its initial location randomly, which is determined to stay 2.0 space units away from each other.
We employed the discrete gradient method in \cite{Furihata2001} to ensure the energy conservation law.
We simulated the equation with a time step size of $\Delta t = 0.001$ for 500 steps and obtained 100 time series (90 for training and 10 for the test).
Every experiment in this section was done with double precision.

We employed a neural network composed of a 1-dimensional convolution layer followed by two fully-connected layers.
A convolution layer with a kernel size of 3 is enough to learn the central difference.
The matrix $G=D$ was implemented as a 1-dimensional convolution layer with the kernel of $(-1/2\Delta x,0,1/2\Delta x)$ and periodic padding.
Following the study on the HNN~\cite{Greydanus2019}, the activation function was the hyperbolic tangent, the number of hidden channels was 200, and each weight matrix was initialized as a random orthogonal matrix.
Each network was trained using the Adam optimizer~\cite{Kingma2014b} with a batch size of 200 and a learning rate of 0.001 for 10,000 iterations.

\begin{table}[t]
    \centering\small
    \caption{Results on the PDE datasets.}\label{tab:pde_score}
    \begin{tabular}{lllrrrrrr}
        \toprule
                                               & \mcb{Integrator}               & \mcc{KdV equation}             & \mcc{Cahn--Hilliard equation}                                                                                     \\
        \cmidrule(lr){2-3}\cmidrule(lr){4-6}\cmidrule(lr){7-9}
        \textbf{Model}                         & \mca{Training}                 & \mca{Prediction}               & \mca{Deriv.}                  & \mca{Energy}  & \mca{Mass}    & \mca{Deriv.}      & \mca{Energy}  & \mca{Mass}    \\
        \midrule
        \multirow{2}{*}{NODE~\cite{Chen2018e}} & RK2                            & RK2                            & >10000                        & >10000        & 2857.81       & 791.25            & >10000        & 914.72        \\
                                               & ada.~\!DP                      & ada.~\!DP                      & >10000                        & >10000        & 2836.45       & 790.48            & >10000        & 913.96        \\
        \midrule
        \multirow{2}{*}{HNN\pp}                & RK2                            & RK2                            & 36.32                         & 6.32          & 0.70          & 344.23            & >10000        & 87.55         \\
                                               & ada.~\!DP                      & ada.~\!DP                      & \underline{23.27}             & 3.01          & 0.34          & \underline{33.03} & 4.89          & 0.80          \\
        \midrule
                                               & \upl{4mm}                      & RK2                            & \up{3mm}                      & 1.84          & 0.28          & \up{2mm}          & >10000        & 821.58        \\
        DGNet                                  & Eq.~\eqref{eq:discrete_system} & ada.~\!DP                      & \textbf{17.48}                & \textbf{1.60} & \textbf{0.25} & \textbf{7.14}     & \textbf{0.34} & \textbf{0.07} \\
                                               & \downl{4mm}                    & Eq.~\eqref{eq:discrete_system} & \down{3mm}                    & \textbf{1.60} & \textbf{0.25} & \down{2mm}        & \textbf{0.34} & \textbf{0.07} \\
        \bottomrule
        \multicolumn{9}{L{13cm}}{\footnotesize The best and second best results are emphasized by bold and underlined fonts, respectively. Multiplied by $10^0$ for Deriv.~and by $10^{\!-\!6}$ for Energy of the Cahn--Hilliard equation, and by $10^{\!-\!3}$ for the others.}
    \end{tabular}
    \vspace*{-3mm}
\end{table}

\begin{wrapfigure}{r}{2.4in}
    \vspace*{-3mm}
    \includegraphics[scale=1.0]{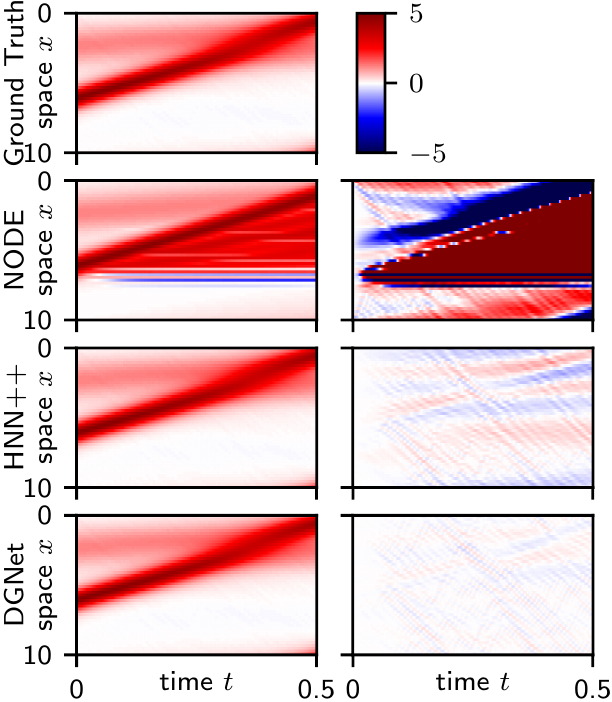}
    \vspace*{-7mm}
    \caption{KdV equation. (left) Predicted state $u$. (right) Error ($\times20$).}\label{fig:kdv}
    \vspace*{-3mm}
\end{wrapfigure}
After training, we examined the mean squared error (MSE) of the time-derivative; we provided an average over 15 trials on Table~\ref{tab:pde_score} (see the column ``Deriv.'').
We omitted the outliers and standard deviations for readability (see Appendix~\ref{appendix:datasets_results} for the full results).
DGNet restored the true time-derivative well.
The HNN\pp\ employed the adaptive Dormand--Prince method, but it suffered from the gap between the time-derivative and the finite difference.
Nonetheless, the application of the HNN to a PDE system is one of the contributions of this study.
NODE failed to model the equation.
For evaluating the long-term consistency, we predicted the test time series from the initial state $\vu^{(0)}$ and obtained the MSE of the total energy and local mass (see the columns ``Energy'' and ``Mass'').
We also visualized the prediction result for each model with the best integrator, which is depicted in Fig.~\ref{fig:kdv}.
DGNet also conserved energy the best with all integrators.
Even though the implicit scheme in Eq.~\eqref{eq:discrete_system} is computationally expensive, DGNet provided the time-derivative for explicit numerical integrators, and it was enough for conserving energy in the present experiment scale (for a longer case, see Appendix~\ref{appendix:datasets_results}).
This result implies that the discrete gradient method provides a good framework for learning from the finite difference; to the best of our knowledge, this is the first time to confirm such contribution of the discrete gradient.
In addition, one might say that the implicit scheme in Eq.~\eqref{eq:discrete_system} is as powerful as the fourth--order integrator with adaptive time stepping even though it is a second--order method.

\paragraph{Dissipative PDE.}
We evaluated the models on a dissipative PDE, namely the Cahn--Hilliard equation.
This equation is derived from free-energy minimization and it describes, for example, the phase separation of copolymer melts~\cite{Furihata1999,Furihata2001}.
The system energy $H$ and time evolution of the discretized 1-dimensional Cahn--Hilliard equation are expressed as follows.
\begin{equation}
    \textstyle H(\vu)=\Delta x\sum_k (\frac{1}{4}(u_k^2-1)^2+\gamma\frac{1}{2}(D \vu)_k^2), \
    \textstyle\frac{\partial \vu}{\partial t}=D_2\nabla H(\vu)=D_2((\vu\!\odot\!\vu\!-\!\vec{1})\!\odot\!\vu-\gamma D_2 \vu),\label{eq:ch}
\end{equation}
where the coefficient $\gamma>0$ denotes the mobility of the monomers.
The mass $u_k$ has an unstable equilibrium at $u_k=0$ (totally melted) and stable equilibria at $u_k=-1$ and $u_k=1$ (totally separated).
We set $\gamma$ to 0.0005, the spatial size to 1, the space mesh size $\Delta x$ to 0.02, the time step size $\Delta t$ to 0.0001, and the initial state $u_k$ to a random sample from $\mathcal U(-0.05,0.05)$.
The other conditions are the same as the case with the KdV equation.

\begin{wrapfigure}{r}{2.4in}
    \vspace*{-7mm}
    \includegraphics[scale=1.0]{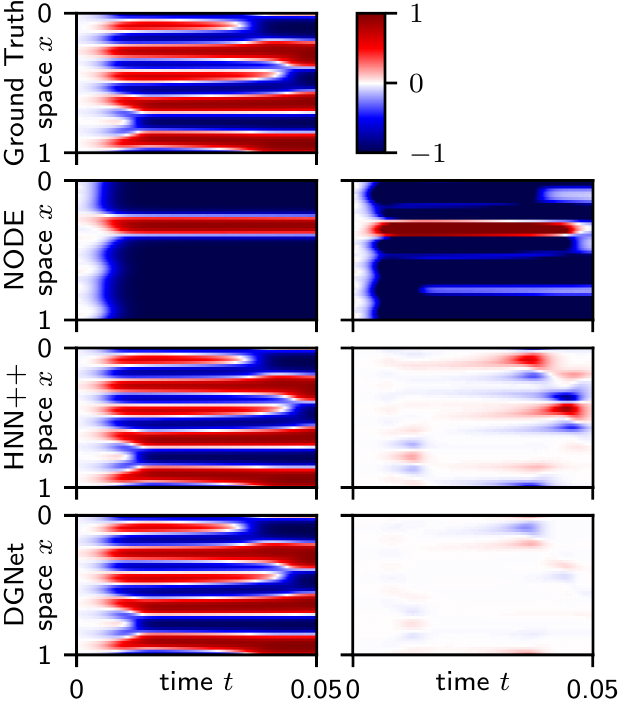}
    \vspace*{-7mm}
    \caption{Cahn--Hilliard equation. (left) Predicted state $u$. (right) Error ($\times3$).}\label{fig:ch}
    \vspace*{-5mm}
\end{wrapfigure}
We summarized the results in Table~\ref{tab:pde_score} and visualized the prediction result for each model with the best integrator in Fig.~\ref{fig:ch}.
DGNet outperformed the HNN\pp\ by a large margin.
The Cahn--Hilliard equation is ``stiff''; this implies that the state can change drastically and an explicit integrator requires a much smaller time step size.
The adaptive Dormand--Prince method evaluated the HNN\pp~50--100 times per time step in the training phase and consumed the proportional computational cost.
However, it did not learn the discrete-time dynamics well; the HNN\pp\ underestimated the diffusion as shown in Fig.~\ref{fig:ch}.
Conversely, DGNet can estimate the dissipative term well, as expected in Theorem~\ref{thm:discrete_conservation_dissipation}.

\begin{table}[t]
    \centering\small
    \caption{Results for the ODE datasets}\label{tab:ode_score}
    \tabcolsep=1.2mm
    \begin{tabular}{lllrrrrrrrr}
        \toprule
                                                  & \mcb{Integrator}               & \mcb{Mass-Spring}              & \mcb{Pendulum}    & \mcb{2-Body}  & \mcb{Real Pendulum}                                                                                                  \\
        \cmidrule(lr){2-3}\cmidrule(lr){4-5}\cmidrule(lr){6-7}\cmidrule(lr){8-9}\cmidrule(lr){10-11}
        \textbf{Model}                            & \textbf{Training}              & \textbf{Prediction}            & \mca{Deriv.}      & \mca{Energy}  & \mca{Deriv.}        & \mca{Energy}      & \mca{Deriv.}     & \mca{Energy}      & \mca{Diff.}      & \mca{Energy}     \\
        \midrule
        \multirow{2}{*}{NODE}                     & RK2                            & RK2                            & 52.68             & 570.32        & 56.67               & 4602.57           & 20.81            & >10000            & \underline{1.38} & 0.62             \\
                                                  & ada.~\!DP                      & ada.~\!DP                      & 55.74             & 574.06        & 55.40               & 4624.66           & 20.71            & >10000            & \textbf{1.37}    & 0.59             \\
        \midrule
        \multirow{2}{*}{HNN~\cite{Greydanus2019}} & RK2                            & RK2                            & \textbf{38.22}    & 61.25         & 42.49               & 404.24            & \underline{5.39} & 93.88             & 1.42             & 2.86             \\
                                                  & ada.~\!DP                      & ada.~\!DP                      & 39.92             & 1.74          & 40.88               & 16.55             & 6.21             & 81.84             & 1.41             & 3.44             \\
        \midrule
        SRNN~\cite{Chen2020a}                     & leapfrog                       & leapfrog                       & 39.47             & 0.69          & \textbf{39.24}      & \underline{11.24} & \textbf{4.36}    & \textbf{40.37}    & (1.38)           & (9.63)           \\
        \midrule
                                                  & \upl{4mm}                      & RK2                            & \up{3mm}          & 61.26         & \up{3mm}            & 743.42            & \up{2mm}         & 81.07             & \up{2mm}         & 0.86             \\
        DGNet                                     & Eq.~\eqref{eq:discrete_system} & ada.~\!DP                      & \underline{38.50} & \textbf{0.62} & \underline{39.30}   & 16.06             & 7.80             & 81.04             & \underline{1.38} & \textbf{0.49}    \\
                                                  & \downl{4mm}                    & Eq.~\eqref{eq:discrete_system} & \down{3mm}        & \textbf{0.62} & \down{3mm}          & \textbf{10.79}    & \down{2mm}       & \underline{81.03} & \down{2mm}       & \underline{0.50} \\
        \bottomrule
        \multicolumn{11}{L{13cm}}{The best and second best results are emphasized by the bold and underlined fonts, respectively. Multiplied by $10^{\!-\!6}$ for the 2-body dataset and by $10^{\!-\!3}$ for the others.}
    \end{tabular}
    \vspace*{-5mm}
\end{table}

\paragraph{Hamiltonian Systems.}
We employed Hamiltonian systems that were examined in the original study of the HNN~\cite{Greydanus2019}, namely a mass-spring system, a pendulum system, and a 2-body system.
Because they are natural systems, we used the matrix $G=S=({\tiny\begin{smallmatrix} 0 & I_n \\ -I_n & 0 \end{smallmatrix}})$.
Instead of the time-derivative, we used the finite difference for training like the cases above.
Moreover, we unified the time step size for training and test (see Appendix~\ref{appendix:datasets_results} for details).
The other experimental settings were the same as the original experiments~\cite{Greydanus2019} and the cases above.
Every experiment of ODEs was done with single precision.
Following the symplectic recurrent neural network (SRNN)~\cite{Chen2020a}, we employed the leapfrog integrator and a pair of networks of the same size to represent the potential energy $V(\vq)$ and kinetic energy $T(\vp)$.
The leapfrog integrator is typically applicable to this class.

We summarized the results in Table~\ref{tab:ode_score}.
DGNet sometimes obtained a worse time-derivative error but it always achieved better prediction errors than the HNN; DGNet learned the contour lines of the Hamiltonian $H$ rather than the time-derivative.
DGNet achieved the best results on the long-term predictions in the mass-spring and pendulum datasets and the second-best result in the 2-body dataset.
The SRNN achieved a remarkable result in the 2-body dataset because its network and integrator are specially designed for the separable Hamiltonian, which is a powerful assumption in general. 
DGNet for the separable Hamiltonian is a possible future study.

\paragraph{Physical System with Friction.}
We evaluated the models on the real pendulum dataset that were obtained by \citet{Schmidt2009} following the study on the HNN ~\cite{Greydanus2019}.
This dataset contains the angle and angular momentum readings of a pendulum bob.
Since the real pendulum has friction, we used the matrix $G=S-R=(\begin{smallmatrix} 0 & 1 \\ -1 & 0 \end{smallmatrix})-(\begin{smallmatrix} 0 & 0 \\ 0 & g \end{smallmatrix})$, where $g$ is an additional parameter that represents the friction and it was initialized to zero.
Solved by a Runge--Kutta method, this model can be regarded as the dissipative symODEN without the control input~\cite{Zhong2020a}.

\begin{wrapfigure}{r}{3.9cm}%
    \centering%
    \vspace*{-3mm}
    \hspace*{-3mm}
    \includegraphics[scale=1.0]{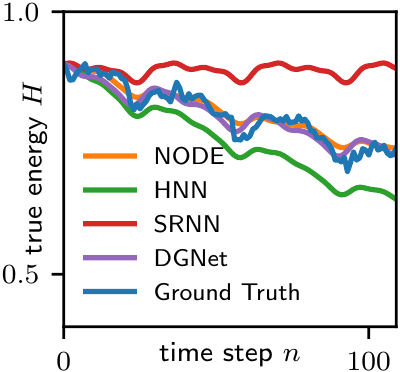}
    \vspace*{-6mm}
    \caption{Results for the real pendulum dataset.}\label{fig:real}
    \vspace*{-4mm}
\end{wrapfigure}
We evaluated the MSE of the finite difference (i.e., 1-step prediction, see the column ``Diff.'') and the MSE of the energies in long-term predictions.
The results are summarized in Table~\ref{tab:ode_score} and Fig.~\ref{fig:real}.
While all methods achieved similar errors in the 1-step prediction, the HNN achieved the worst error in the long-term prediction; the HNN overestimated the friction while DGNet estimated the friction term well, as expected in Theorem~\ref{thm:discrete_conservation_dissipation}.
The energy $H$ derived only from the angle and momentum of the pendulum bob does not monotonically decrease because the other components (e.g., the pendulum rod) are ignored.
DGNet estimated the alternative energy dissipating monotonically, and it predicted the states well.
NODE, which can approximate a general ODE, also worked better than the HNN.
For reference, we confirmed that the SRNN failed in modeling the real pendulum dataset because of the lack of a friction term.

\section{Conclusion}
We proposed a discrete-time energy-based physical model.
The approach unified and widely extended the scope of neural networks for physics simulations.
Using the automatic discrete differentiation algorithm, a neural network is able to use the discrete gradient method, learn from the discrete-time data accurately, and admit the important laws of physics in discrete time.

\section*{Broader Impact}
\paragraph{Novel paradigm of mathematical modeling.}
For computing the physical phenomena, one has to build a difference equation in discrete time.
Mathematical models for physics are typically given as differential equations, and they are discretized using numerical integrators (see the lower part of Fig.~\ref{fig:modeling_concept}).
This discretization may destroy the geometrical structure from which the laws of physics follow.
Most previous studies on neural networks for physical phenomena employ this approach~\cite{Chen2020a,Greydanus2019,Saemundsson2020,Tong2020,Zhong2020a,Zhong2020}.

The discrete gradient method is a discrete-time approximation of a continuous-time structure (see the middle part of Fig.~\ref{fig:modeling_concept})~\cite{Celledoni2012,Celledoni2014,Furihata1999,Furihata2001,Furihata2010,Gonzalez1996,Itoh1988,Quispel1996a,Quispel1996}.
It admits the laws of physics in discrete time, but it suffers from the discretization error, too.
This method has been inapplicable in neural networks until this study. We addressed this issue by introducing the automatic discrete differentiation algorithm.

Our approach is defined in discrete time and it learns discrete-time dynamics directly from discrete-time data (see the upper part of Fig.~\ref{fig:modeling_concept}).
As a result, it never suffers from the discretization error even though the modeling error matters.
In this sense, this study provides a novel paradigm for mathematical modeling.

\paragraph{Novel framework of scientific machine learning.}
The proposed approach combines neural networks and geometric integration, in particular, the discrete gradient method that is derived by the automatic discrete differentiation algorithm.
As far as we know, the proposed framework is the first approach that unifies mathematical modeling from the first principles, data-driven modeling, and energetic-property-preserving numerical computations.
From the viewpoint of scientific computing, the latter two may significantly accelerate scientific simulations.
In practical simulations, modeling and numerical computations have been performed separately, while these must be unified because the results of the simulations often require modification of the mathematical models, and vice versa.

In addition, as implemented by PyTorch, our programming codes for the proposed framework are naturally parallelized.
This implementation is the first numerical library that provides parallelized numerical simulations while using the discrete gradient method, which widely accelerates the computation in scientific simulations.


\begin{ack}
    Funding in direct support of this work: JST CREST Grant Number JPMJCR1914, JST PRESTO Grant Number JPMJPR16EC, JSPS KAKENHI Grant Number 19K20344, 20K11693 and scholoarship by Yoshida Scholarship Foundation.
\end{ack}

{\small

}

\clearpage
\newpage
\appendix
\renewcommand\thetable{A\arabic{table}}
\setcounter{table}{0}
\renewcommand\thefigure{A\arabic{figure}}
\setcounter{figure}{0}
{\huge Supplementary Material: Appendices}
\section{Geometric Numerical Integration}\label{appendix:GI}
Geometric numerical integration is a study on the numerical integrators of ODEs that preserve the geometric property of the target class of equations. Contrary to the classical integrators, such as the Runge--Kutta family of numerical integrators, these integrators are associated with a certain restricted target class of equations; thereby, they are designed so that important geometric properties admitted by the equations in the target class are preserved. As a consequence, the numerical solutions computed by these integrators are not only quantitatively accurate but they are also qualitatively superior to those by the classical integrators.
A typical target class of the equations is the Hamilton equation, which is, in terms of geometry, defined as a symplectic-gradient flow on symplectic manifolds. The Hamilton equation is characterized by the conservation law of a symplectic form, a non-degenerate and closed 2-form (covariant 2-tensor) on the underlying symplectic manifold.
Symplectic integrators are the numerical integrators that preserve this conservation law; hence, they can be in a sense considered as a discrete Hamiltonian system that is an approximation to the target Hamiltonian system.
Due to this property, there should exist a corresponding Hamiltonian function, i.e., energy function, which is called a shadow Hamiltonian and it must be an approximation to the energy function of the target system. Because the shadow Hamiltonian is exactly conserved, the original energy function is conserved not exactly but highly accurately.
This property of symplectic integrators is theoretically guaranteed by the existence theorem of the shadow Hamiltonian.

Because the symplectic integrators conserve only the approximation of the energy function, one may want to design energy-preserving integrators, that is, numerical integrators that preserves the energy function exactly. As such, the discrete gradient method has achieved great success. The Ge--Marsden theorem implies that there does not exist an integrator which is symplectic and energy-preserving~\cite{Marsden1988}.
More precisely, this theorem states that if an integrator is symplectic and energy-preserving for all Hamiltonian systems, then the integrator is completely error-free; the orbits of the numerical results must be exactly on the true orbit.

As shown above, a discrete gradient is defined in Definition \ref{def:discrete_gradient}. Because a discrete gradient is not uniquely determined from this definition, several derivation methods have been proposed. A preferred method is the average vector field method, which is second--order accurate and also conjugate-symplectic (roughly speaking, the method also approximates a symplectic method with higher order accuracy).
However, most of the existing discrete gradients require explicit representation of the Hamiltonian; hence, they are not available for neural networks.
An exception is the Ito--Abe method~\cite{Itoh1988}
\begin{align}
    \bnabla_{\mathrm{IA}} H(\vu, \vv) =
    \begin{pmatrix}
        \frac{
            H((u_1, u_2, \ldots, u_N)^\top)
            -
            H((v_1, u_2, \ldots, u_N)^\top)
        }{
            u_1 - v_1
        } \\
        \frac{
            H((v_1, u_2, \ldots, u_N)^\top)
            -
            H((v_1, v_2, \ldots, u_N)^\top)
        }{
            u_2 - v_2
        }
        \\
        \vdots
        \\
        \frac{
            H((v_1, v_2, \ldots, u_N)^\top)
            -
            H((v_1, v_2, \ldots, v_N)^\top)
        }{
            u_N - v_N
        }
    \end{pmatrix}
\end{align}
which requires a number of evaluations of the energy function; hence, it is computationally intractable for large systems.
In contrast to the Ito--Abe discrete gradient, the discrete gradient obtained by our method is available for neural networks and computationally efficient. It requires no more than twice the computational cost of the ordinary automatic differentiation. Hence, the proposed automatic discrete differentiation algorithm is indispensable for practical application of the discrete gradient method for neural networks.
See also ~\cite{Furihata2010, Hairer2006}.

\section{The Target Equations: Geometric Ordinary and Partial Differential Equations}\label{appendix:geometry}
The target equations for this study are the differential equations with a certain geometric structure. Although the differential equations include PDEs, for simplicity, we only consider ODEs; we assume that the target PDE is semi-discretized in space so that the equation can be transformed into a system of ODEs (see Appendix \ref{appendix:semi-discretization}). Let $\mathcal{M}$ be a finite dimensional manifold and $T \mathcal{M}$ be the tangent bundle of $\mathcal{M}$.
A covariant 2-tensor $\omega_{\vec{u}}$ at $\vec{u} \in \mathcal{M}$ is a bi-linear continuous map $T_{\vec{u}} \mathcal{M} \times T_{\vec{u}} \mathcal{M} \to \mathbb{R}$, that is, a bi-linear continuous map that maps two tangent vectors at $\vec{u}$ into a real number.
We assume that $\omega_\vu$ is non-degenerate in the sense that for any bounded linear map $f \in T_\vu^* \mathcal{M}$, $\omega_{\vu}(\vv, \cdot) = f(\cdot)$ defines a unique vector $\vv \in T_\vu \mathcal{M}$. The target class of the differential equations for this study is the equations of the following form.
\begin{equation}\label{eq:target_geom}
    \frac{\d \vu}{\d t}=X, \quad \omega_{\vu}(\vec{X}, \vv) = \d H(\vv) \quad \mbox{for all}\ \vv \in T_\vu \mathcal{M},
\end{equation}
where $H: \mathcal{M} \to \mathbb{R}$ is an energy function and $\d H$ is the Fr\'echet derivative of $H$.
In fact, because $\omega$ is non-degenerate, the vector $\vec{X}$ in the above equation is uniquely determined.
The covariant 2-tensor $\omega_{\vu}$ can be written as follows.
\begin{equation}
    \omega_{\vu}(\vv, \vw) = \vw^\top A(\vu) \vv
\end{equation}
with a matrix $A(\vu)$; hence, Eq.~\eqref{eq:target_geom} is shown to be equivalent to
\begin{equation}
    \vw^\top A(\vu) \frac{\d \vu}{\d t}= \d H(\vw).
\end{equation}
By using the standard inner product $\left \langle \cdot, \cdot \right \rangle$, this can be expressed as follows.
\begin{equation}
    \left \langle A(\vu) \frac{\d \vu}{\d t}, \vw \right \rangle = \left \langle \nabla H, \vw \right \rangle,
\end{equation}
from which it follows
\begin{equation}
    \frac{\d \vu}{\d t} = G(\vu) \nabla H, \quad G(\vu) = A(\vu)^{-1},
\end{equation}
where $A(\vu)^{-1}$ exists because $\omega_{\vu}$ is non-degenerate. This is our target equation in Eq.~\eqref{eq:gradient_flow}.

The typical examples of the manifolds with such a 2-tensor are the Riemannian manifold \cite{Celledoni2018} and the symplectic manifold \cite{Marsden1999}. In the former case, $\omega_{\vu}$ is the inverse of the matrix that represents the metric tensor, which corresponds to the negative definite matrix $G(\vu)$. In the latter case, $\omega_{\vu}$ is the symplectic form, for which the matrix $G(\vu)$ is skew-symmetric.
They correspond to these manifolds in the target equation in Eq.~\eqref{eq:target_geom} and they are known as the gradient flow and the symplectic flow, respectively.

\section{Proofs}\label{appendix:proofs}
This section provides the proofs of the Theorems in the main text.
\begin{proof}[Proof of Theorem \ref{thm:continuous_conservation_dissipation}]
    From the chain-rule, it follows that
    \begin{equation}
        \frac{\d H}{\d t}
        = \nabla H \cdot \frac{\d \vu}{\d t}
        = \nabla H^\top G \nabla H,
    \end{equation}
    and this is less than or equal to $0$ if $G \leq O$ and it vanishes if $G$ is skew-symmetric.
\end{proof}

\begin{proof}[Proof of Theorem \ref{thm:mass_conservation}]
    \begin{equation}
        \frac{\d}{\d t} \sum_k u_k = \vec{1} \frac{\d u}{\d t} = \vec{1} G \nabla H = 0.
    \end{equation}
\end{proof}

\begin{proof}[Proof of Theorem \ref{thm:discrete_conservation_dissipation}]
    If $G$ is negative semi-definite, it follows from the definition of the discrete gradient that
    \begin{equation}
        \displaystyle\frac{H(\vu^{(n+1)})-H(\vu^{(n)})}{t^{(n+1)}-t^{(n)}} = \bnabla H(\vu^{(n+1)}, \vu^{(n)})^\top \bG \bnabla H(\vu^{(n+1)}, \vu^{(n)}) \leq 0
    \end{equation}
    and the amount of the energy dissipation $\bnabla H(\vu^{(n+1)}, \vu^{(n)})^\top \bG \bnabla H(\vu^{(n+1)}, \vu^{(n)})$ is indeed an approximation to $\nabla H(\vu)^\top G \nabla H(\vu)$, which shows the discrete energy dissipation law. Similarly, if $G$ is skew-symmetric,
    \begin{equation}
        \displaystyle\frac{H(\vu^{(n+1)})-H(\vu^{(n)})}{t^{(n+1)}-t^{(n)}} = \bnabla H(\vu^{(n+1)}, \vu^{(n)})^\top \bG \bnabla H(\vu^{(n+1)}, \vu^{(n)}) = 0
    \end{equation}
    is obtained in the same way.
    For the discrete mass conservation law,
    \begin{equation}
        \frac{\sum_k u_k^{(n+1)}-\sum_k u_k^{(n)}}{t^{(n+1)}-t^{(n)}}=\vec{1}\left(\frac{\vu^{(n+1)}-\vu^{(n)}}{t^{(n+1)}-t^{(n)}}\right)=\vec{1}\ \!\bG \bnabla H(\vu^{(n+1)}, \vu^{(n)})=0.
    \end{equation}
\end{proof}

\begin{proof}[Proof of Theorem \ref{thm:discrete_differential_rules}]
    The first equation comes from the linearity with respect to the first argument.
    The third equation is obvious by definition.
    The second equation is a well-known result based on the studies of the discrete gradient methods~\cite{Furihata2010}:
    \begin{equation}
        \textstyle f(x_1)g(x_1)-f(x_2)g(x_2)=\frac{g(x_1)+g(x_2)}{2}(f(x_1)-f(x_2))+\frac{f(x_1)+f(x_2)}{2}(g(x_1)-g(x_2)).
    \end{equation}
\end{proof}

\section{Semi-Discretization of the Partial Differential Equations}\label{appendix:semi-discretization}
Although for simplicity we have only considered the ODEs in Section \ref{sec:method}, the target equations of our approach include PDEs.

The target PDEs are equations of the following form.
\begin{equation}\label{eq:target_pde}
    \frac{\partial u}{\partial t} = G(u) \nabla H,
\end{equation}
where $u$ may depend on $t$ and $x \in \mathbb{R}^n$, and $G(u)$ is a linear operator that depends on the function $u$. In the underlying functional space that admits the inner product $\langle \cdot, \cdot, \rangle$, we can consider the adjoint operator of $G(u)$ as the operator that satisfies
\begin{equation}
    \left \langle {G} v, w \right \rangle = \left \langle v, \tilde{G} w \right \rangle
\end{equation}
for any functions $v, w$. If the space is real and finite dimensional and the inner product is the standard inner product, the adjoint operator of a matrix $G$ is $G^\top$.

Similarly to the finite dimensional equations, it follows from the relation between the Fr\'echet derivative and the gradient
\begin{equation}
    \frac{\d H}{\d t} = \d H\left( \frac{\partial u}{\partial t}\right)
    = \left\langle \nabla H, \frac{\partial u}{\partial t} \right\rangle
    = \langle \nabla H, G \nabla H \rangle.
\end{equation}
Thus, Eq.~\eqref{eq:target_pde} has the energy dissipation law
\begin{equation}
    \frac{\d H}{\d t} = \langle \nabla H, G \nabla H \rangle
    \leq 0
\end{equation}
if the operator $G(u)$ is negative semi-definite in the sense that $\langle v, G v \rangle\leq 0$
for all $v$, and the energy conservation law
\begin{equation}
    \frac{\d H}{\d t} = 0
\end{equation}
if $G(u)$ is skew-adjoint; $\tilde{G}(u) = - G(u)$.

Examples of the negative semi-definite $G$ in practical applications include
\begin{equation}\label{eq:g_operator}
    G = (-1)^{s-1} \frac{\partial^{2 s}}{\partial x^{2 s}},
\end{equation}
where $s$ is a non-negative integer and we define $G = -1$ for $s=0$.
This operator is negative semi-definite with respect to the $L^2$ inner product under certain boundary conditions. For example, suppose that the underlying space is the interval $[0, 1]$. Then for functions $f$, $g$ it holds that
\begin{align*}
    \int_0^1 f \frac{\partial^2 g}{\partial x^2} \d x
    =
    - \int_0^1 \frac{\partial f}{\partial x} \frac{\partial g}{\partial x} \d x
    + \left[  f \frac{\partial g}{\partial x} \right]^1_0
    =
    \int_0^1 \frac{\partial^2 f}{\partial x^2} g \d x
    + \left[  f \frac{\partial g}{\partial x} \right]^1_0
    - \left[  \frac{\partial f}{\partial x} g \right]^1_0
\end{align*}
and hence if $f$ and $g$ satisfy
\begin{align*}
    \left[  f \frac{\partial g}{\partial x} \right]^1_0
    - \left[  \frac{\partial f}{\partial x} g \right]^1_0 = 0,
\end{align*}
the operator $\partial^2 / \partial x^2$ is negative semi-definite with respect to the $L^2$ inner product:
\begin{equation}
    \left \langle \frac{\partial^2 f}{\partial x^2}, g \right \rangle
    = - \left \langle f, \frac{\partial^2 g}{\partial x^2} \right \rangle.
\end{equation}
Similarly, the operators
\begin{equation}\label{eq:g_operator_skew}
    G = \frac{\partial^{2 s-1}}{\partial x^{2 s-1}},
\end{equation}
are skew-adjoint.

In order to apply our framework to the PDEs, it would be preferable to discretize the operator $G(u)$ because the property of this operator is essential for the energy conservation or the dissipation law and the mass conservation law.
If $G(u)$ is given by Eq.~\eqref{eq:g_operator} or Eq.~\eqref{eq:g_operator_skew}, this operator can be discretized by using the central difference operators such as Eq.~\eqref{eq:first_second_order_difference}, while preserving the desired properties.
In general, suppose that the matrix $D_s$ approximates the differential operator $\partial^s /\partial x^s$ then
\begin{equation}
    \frac{D_s + D_s^\top}{2}, \qquad \frac{D_s - D_s^\top}{2}
\end{equation}
are in principle respectively negative/positive semi-definite and skew-adjoint approximations to  $\partial^s /\partial x^s$. This design of the operator $G$ yields an approximation to the operator $G$ with the desired property and also with the desired accuracy. For further details for the structure-preserving semi-discretization, see,. e.g.,~\cite{Celledoni2012, Furihata2010}.

\section{Comparison with Symplectic Integrators}\label{appendix:comparison_with_symplectic}
The Ge--Marsden theorem shows that no method can be both symplectic and strictly energy-preserving~\cite{Marsden1988}.
Hence, the proposed discrete gradient method does not conflict with but complements a neural network model solved by a symplectic integrator~\cite{Chen2020a,Saemundsson2020,Zhong2020}.
One can choose a preferable one depending on targeted tasks.

Some symplectic integrators (such as the variational integrator) are known to preserve the momentum in a physical system.
The proposed discrete gradient method can have the property of conjugate symplecticity, which guarantees the preservation of the momentum with high accuracy~\cite{Hairer2010}; in fact for a certain class of problems the discrete gradient derived by the proposed algorithm is equivalent to the average vector field method, which is known to be conjugate symplectic of order four~\cite{Celledoni2012}.
Moreover, if a certain quantity other than the energy should be strictly preserved, one can design another discrete gradient by the method proposed in \cite{Dahlby2011} so that both the energy and the quantity are preserved.

With varying time-step, symplectic integrators are in general known to lose symplecticity and cannot preserve the system energy just like non-symplectic Runge--Kutta methods~\cite{Hairer2006}, while the proposed discrete gradient method can.
In particular, depending on the learned parameters, the proposed method can choose the time-step while preserving the energy.

\section{Automatic Discrete Differentiation Algorithm}\label{app:add}
Many practical implementations of the automatic differentiation algorithm indeed obtain gradients directly~\cite{Paszke2017}; hence, they are sometimes called the \textsf{autograd} algorithm.
A discrete version of the autograd algorithm is enough for this study.
From this viewpoint, the implementation of the discrete autograd algorithm is introduced as follows.

The Fr\'echet derivative $\d g(\cdot;\vu):\R^N\rightarrow\R^M$ of a function $g:\R^N\rightarrow\R^M$ at $\vu$ is a bounded linear operator that satisfies the following condition.
\begin{equation}
    \lim_{||\vh||\rightarrow+0}\frac{||g(\vu+\vh)-g(\vu)-\d g(\vh;\vu)||}{||\vh||}=0.
\end{equation}
The Fr\'echet derivative $\d g$ can be regarded as multiplication by the Jacobian matrix $J_g(\vu)\in\R^{M\times N}$ at $\vu$.
\begin{equation}
    \d g(\vw;\vu) = J_g(\vu)\vw.
\end{equation}
The chain-rule can be rewritten as a chain of Jacobian matrices.
\begin{equation}
    \d (f\circ g)(\vw;\vu) = \d f(\cdot;g(\vu))\circ\d g(\vw;\vu)= J_f(g(\vu)) J_g(\vu) \vw.
\end{equation}
The gradient $\nabla f$ of a scalar-valued function $f:\R^M\rightarrow\R$ is defined using an inner product $\cdot$ as follows.
\begin{equation}
    \lim_{||\vk||\rightarrow+0}\frac{||f(\vv+\vk)-f(\vv)-\nabla f(\vv)\cdot\vk||}{||\vk||}=0.
\end{equation}
This implies that the gradient is dual to the derivative.
Hence, the gradient $\nabla f$ is equal to the transposed Jacobian matrix $J_f$.
\begin{equation}
    \d f(\vw;\vv)= J_f(\vv)\vw =\nabla f(\vv)\cdot \vw.
\end{equation}
Therefore, the gradient $\nabla (f\circ g)$ of the compositional function $f\circ g$ is obtained by multiplying the upper-layer gradient $\nabla f$ by the transposed lower-layer Jacobian matrix $J_g(\vu)^\top$.
\begin{equation}
    \nabla (f\circ g)(\vu) = J_g(\vu)^\top\nabla f(g(\vu)).
\end{equation}
This is the autograd algorithm to obtain the gradient of a compositional function, which is shown in the left panel of Fig.~\ref{fig:add}.

\begin{figure}[t]\centering
    \includegraphics[page=2,scale=0.4]{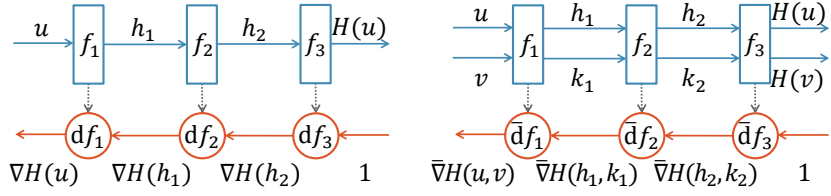}
    \caption{A conceptual comparison between the autograd algorithm (left) and the discrete autograd algorithm (right).}
    \label{fig:add}
\end{figure}

\begin{figure}[t]
    \begin{algorithm}[H]
        \footnotesize
        \caption{Discrete Autograd Algorithm}
        \label{alg:add}
        \begin{algorithmic}
            \renewcommand{\algorithmicrequire}{\textbf{Input:}}
            \renewcommand{\algorithmicensure}{\textbf{Output:}}
            \Require a function $H(\cdot)=f_N\circ\cdots\circ f_1(\cdot)$ for $i\in\{1,\dots, N\}$, and arguments $\vu$ and $\vv$
            \Ensure discrete gradient $\bnabla H(\vu,\vv)$
            \State $\vh_0\leftarrow\vu$, $\vk_0\leftarrow\vv$
            \For{$i=1,\cdots,N$}
            \State $\vh_i\leftarrow f_i(\vh_{i-1})$, $\vk_i\leftarrow f_i(\vk_{i-1})$
            \EndFor
            \State $H(\vu)\leftarrow \vh_N$, $H(\vv)\leftarrow\vk_N$
            \State $\bnabla H(\vh_N,\vk_N)\leftarrow 1\!\!1$
            \For{$i=N,\cdots,1$}
            \If{$f_i$ is a linear layer}
            \State $\bar J_{f_i}\leftarrow J_{f_i}$
            \ElsIf{$f_i$ is an element-wise nonlinear activation function}
            \State $\bar J_{f_i}\leftarrow \mathrm{diag}(\vh_{i}-\vk_{i})\mathrm{diag}(\vh_{i-1}-\vk_{i-1})^{-1}$
            \Else
            \State $\bar J_{f_i}\leftarrow$(depending on the function)
            \EndIf
            \State $\bnabla H(\vh_{i-1},\vk_{i-1})\leftarrow\bar J_{f_i}^\top\bnabla H(\vh_i,\vk_i) $
            \EndFor
            \State \Return $\bnabla H(\vu,\vv)\leftarrow\bnabla H(\vh_0,\vk_0)$
        \end{algorithmic}
    \end{algorithm}
\end{figure}

For the discrete autograd algorithm, we replace the Jacobian matrices $J_g$ with their discrete counterparts $\bar J_g$ as shown in the right panel of Fig.~\ref{fig:add}.
For a linear layer $g$, the discrete Jacobian matrix $\bar J_g$ is equal to the ordinary one $J_g(\vv)=\frac{\partial g}{\partial \vv}$.
For an element-wise nonlinear activation layer $g$, the discrete Jacobian matrix $\bar J_g$ is a diagonal matrix where each non-zero element is expressed as $\frac{f(v_1)-f(v_2)}{v_1-v_2}$ when given two scalar arguments $v_1$ and $v_2$.
If the two arguments $v_1$ and $v_2$ are closer than $\epsilon$, we use the gradient $\frac{\d f}{\d z}$ at the midpoint $z=\frac{v_1+v_2}{2}$ to avoid the loss of significance.
We empirically found that $\epsilon=10^{-6}$ and $\epsilon=10^{-12}$ worked well with single and double precisions, respectively.
We summarize the discrete autograd algorithm in Algorithm~\ref{alg:add}.

For obtaining a discrete gradient, the automatic discrete differentiation algorithm requires two forward paths and one modified backward path.
The computational cost of a discrete gradient is one and a half times as much as that of the ordinary gradient.
The training of DGNet is less expensive than the training of HNN with the explicit midpoint method (RK2), and it is tens of times less expensive than the training of HNN with the adaptive Dormand--Prince method.

\section{Details of Datasets and Results}\label{appendix:datasets_results}
We implemented all codes using Python v3.7.3 with libraries; numpy v1.16.2, scipy v1.2.1, and PyTorch v1.4.0~\cite{Paszke2017}.
We performed all experiments on NVIDIA TITAN V for PDEs and GeForce 2080 Ti for ODEs.
We also used torchdiffeq v0.0.1 library for numerical integrations of neural network models~\cite{Chen2018e}.
The results are summarized in Tables~\ref{tab:pde_score_appendix}, \ref{tab:ode_score_appendix}, and \ref{tab:real_score_appendix}.

\begin{figure}[t]\centering
    \includegraphics[scale=1.0]{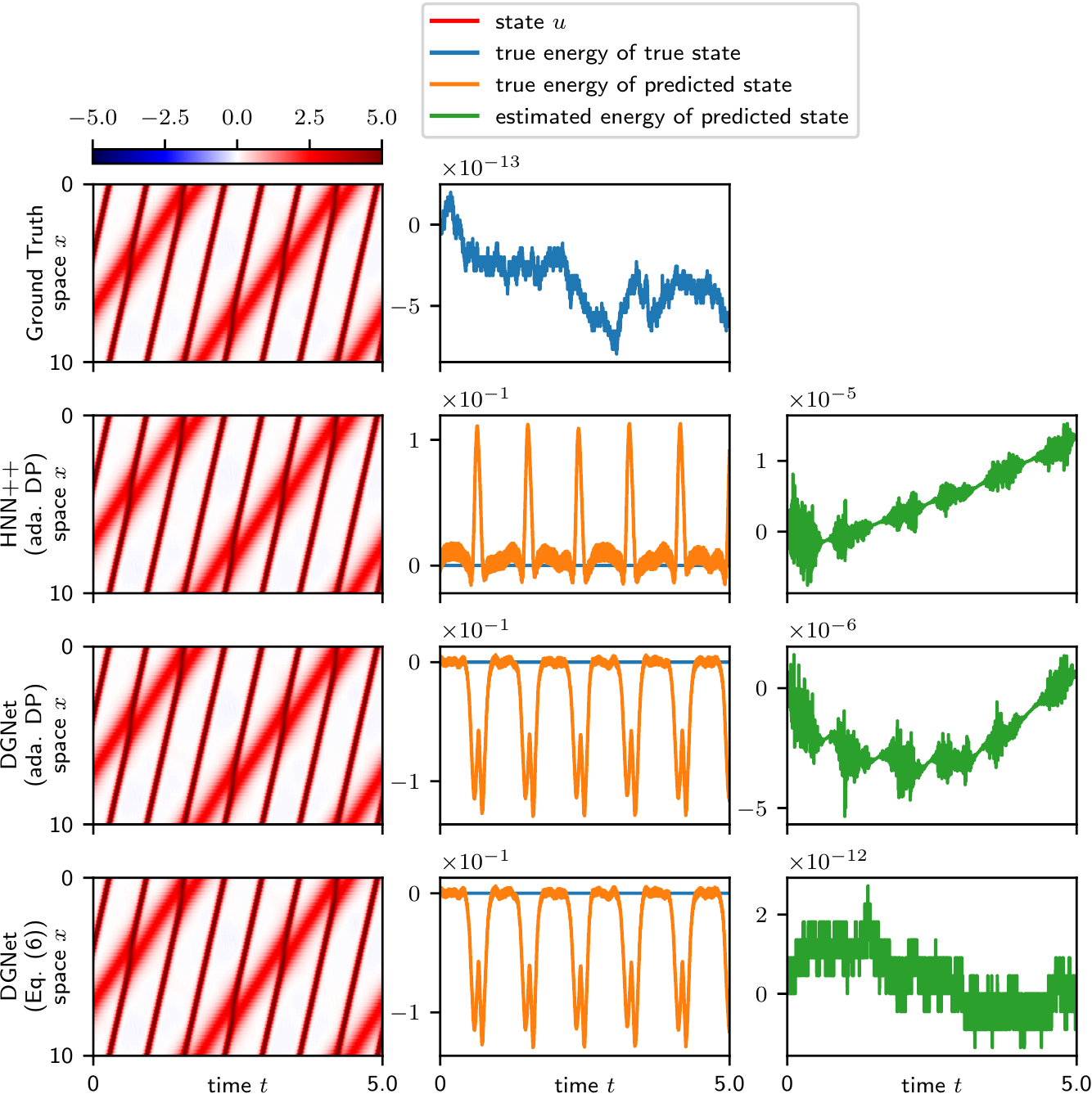}\\
    \hspace*{8mm}(a)\hspace*{40mm}(b)\hspace*{40mm}(c)
    \caption{
        Results of long-term predictions.
        Models and integrators are on the left edge.
        (a) State $u$.
        (b) The system energies of the true state and predicted state derived from the true equation, denoted by blue and orange lines, respectively.
        (c) The system energy of the predicted state when considering the trained neural network as a true equation.
    }
    \label{fig:longprediction}
\end{figure}
\textbf{PDE systems.}
We provide detailed results on Tables~\ref{tab:pde_score_appendix}, which corresponds to Table~\ref{tab:pde_score}.
The standard deviations are preceded by plus-minus signs $\pm$, and the scales are noted in brackets.
For all errors, the standard deviations are large.
The performence difference between HNN\pp\ and DGNet would not be significant for the KdV equation but it is obviously significant for the Cahn--Hilliard equation.

We also provide the results of longer-time predictions of the KdV equation in Fig.~\ref{fig:longprediction}.
Under the conditions same as in Section~\ref{sec:experiments}, HNN\pp\ and DGNet predicted state $u$ for 5,000 steps, as shown in the left column.
In the center column, the true equation in Eq.~\eqref{eq:kdv} gives the system energies of the true state and predicted state, as denoted by the blue and orange lines, respectively.
The system energy of the true state is conserved within a range of the rounding error.
The prediction errors become larger when two solitons collide with each other, but they are restored to their former levels; each model learned collisions qualitatively rather than quantitatively.
In the right column, each panel shows the system energy of the predicted state when a trained neural network model is considered as a true equation.
The neural network models formed Hamiltonian systems in Eq.~\eqref{eq:gradient_flow} and may conserve the system energy.
However, the system energy learned by HNN\pp\ increases, implying that the conservation law is destroyed.
This phenomena is called \emph{energy drift} and occurs commonly in a Runge--Kutta method integrating a Hamiltonian system.
The energy drift is a practical issue for a simulation of molecular dynamics and solar systems, where the number of time steps is more than one million and the numerical error becomes more significant than the modeling error.
This is the main reason why structure-preserving integrators are needed~\cite{Hairer2006}.
The Runge--Kutta method also destroys the conservation law that DGNet potentially produces (see the third row).
Only when using the implicit scheme in Eq.~\eqref{eq:discrete_system}, DGNet gives the system energy that fluctuates within a range of $\pm3\times10^{-12}$, implying that DGNet conserves the system energy only with the rounding error.

\textbf{ODE systems.}
In Section~\ref{sec:experiments}, we evaluated the models on the ODE datasets from the original source codes for the study on the HNN~\cite{Greydanus2019}.
Each of their datasets is composed of three parts; the first is for training, the second is for evaluating the time-derivative error, and the third is for evaluating the accuracy of the long-term prediction.
The number of observations and the duration of each trajectory are summarized in Table~\ref{tab:dataset}.
For the pendulum dataset, each trajectory in the training and test sets consists of 45 observations over three unit times. Meanwhile for the long-term prediction set, it was composed of 100 observations over 20 unit times.
This difference did not matter in the original study because each model was trained with the true time-derivative and it used the adaptive Dormand--Prince method for the time-series prediction.
Conversely, in our experiments, the finite difference was instead given.
The difference in the time step size caused unanticipated impacts on all models.
Hence, in Section~\ref{sec:experiments}, we set the conditions for training and testing to the same as those for the long-term prediction.
Moreover, for the spring dataset prediction, the time step size was rescaled to compensate for the observation noise in the original implementation.
However, we found that this modification did not matter in our experiments; thus, we removed the rescaling.

We provide detailed results on Tables~\ref{tab:ode_score_appendix} and~\ref{tab:real_score_appendix}, which correspond to Table~\ref{tab:ode_score}.
The standard deviations are preceded by plus-minus signs $\pm$, and the scales are noted in brackets.
For the real pendulum dataset, the long-term prediction error of DGNet is significantly smaller than that of the HNN; DGNet extimates the amount of energy dissipation well, as expected in Theorem~\ref{thm:discrete_conservation_dissipation}.
Even through other differences between the HNN and DGNet would not be significant, this fact is still remarkable.
For the HNN, the Dormand--Prince method is a fourth--order method and adjusts the time step size to suppress the prediction error smaller than a given threshold; it is the most numerically accurate and realible integrator in our experiments.
Conversely, DGNet and the leapfrog integrator are second--order methods.
For learning physical phenomena, the qualitative property is important equally to or more than the quantitative accuracy.

\begin{figure}[t]\centering
    \includegraphics[scale=1.0]{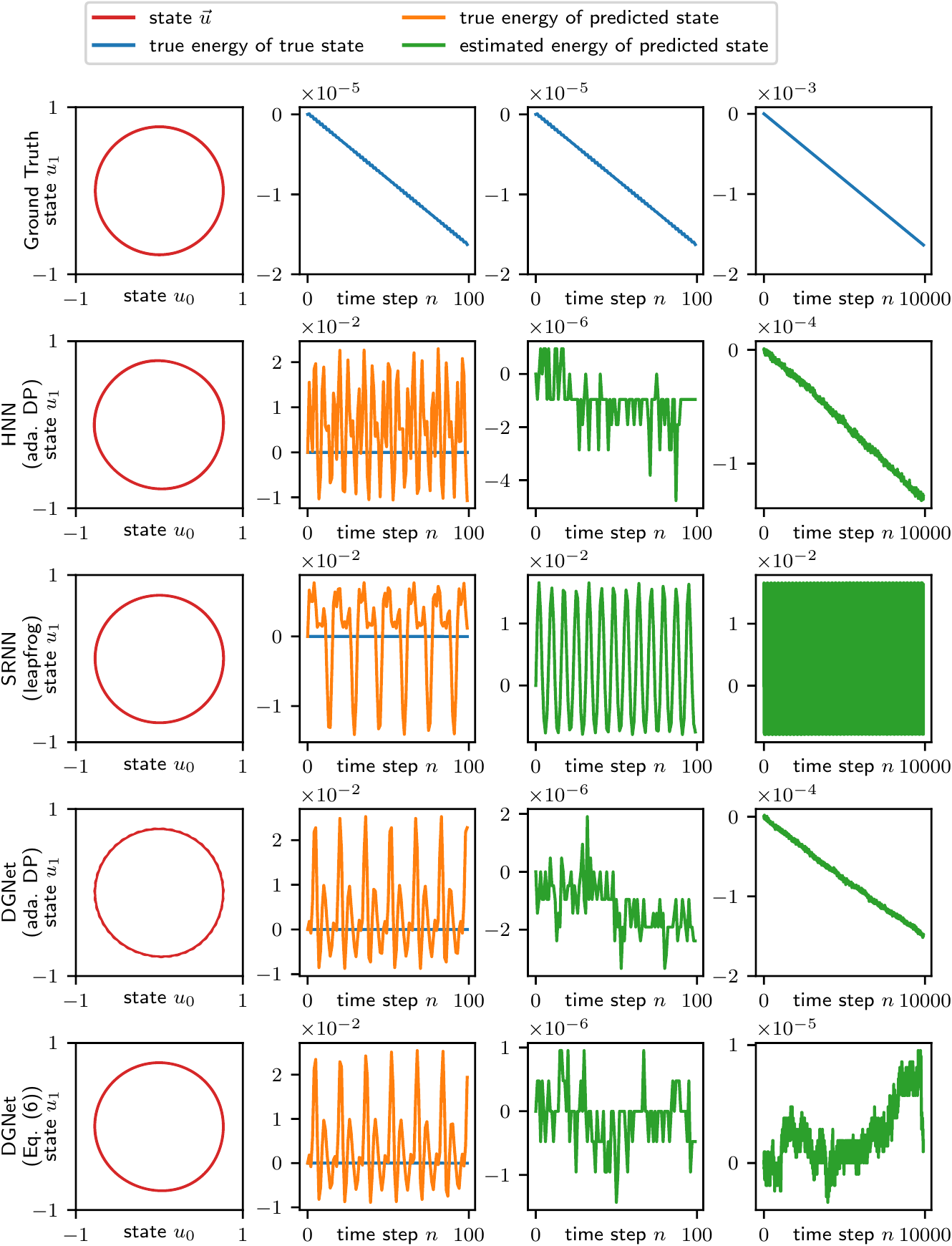}
    \hspace*{10mm}(a)\hspace*{25mm}(b)\hspace*{30mm}(c)\hspace*{30mm}(d)
    \caption{
        Detailed results of the mass-spring dataset.
        Models and integrators are on the left edge.
        (a) A trajectory of state $u$.
        (b) The system energies of the true state and predicted state derived from the true equation, denoted by blue and orange lines, respectively.
        (c)(d) The system energy of the predicted state when considering the trained neural network as a true equation (c) for 100 steps, and (d) for 10,000 steps.
    }
    \label{fig:spring}
\end{figure}

\begin{figure}[t]\centering
    \includegraphics[scale=1.0]{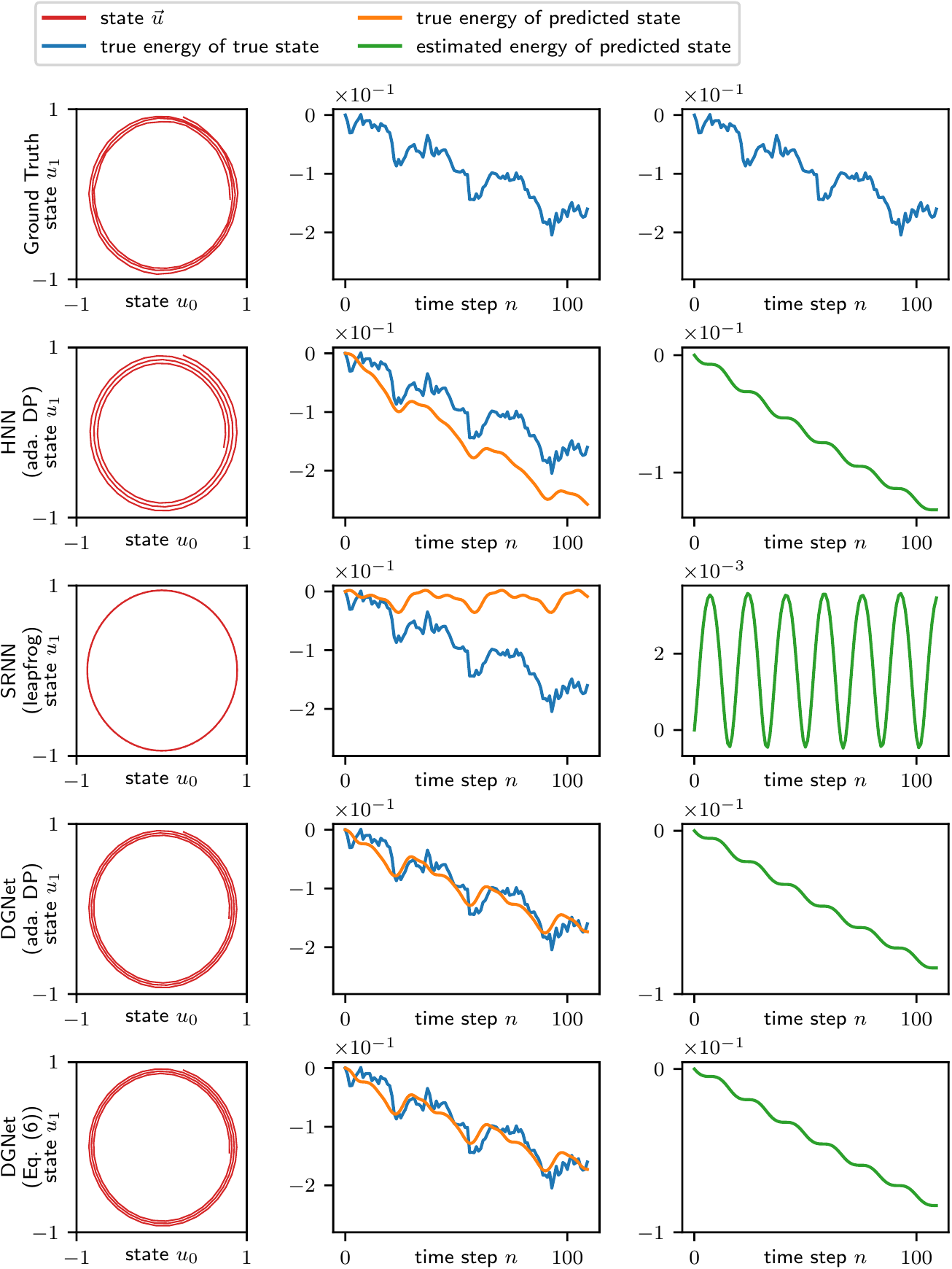}
    \hspace*{5mm}(a)\hspace*{40mm}(b)\hspace*{45mm}(c)
    \caption{
        Detailed results of the real pendulum dataset.
        Models and integrators are on the left edge.
        (a) A trajectory of state $u$.
        (b) The system energies of the true state and predicted state derived from the true equation, denoted by blue and orange lines, respectively.
        (c) The system energy of the predicted state when considering the trained neural network as a true equation.
    }
    \label{fig:real2}
\end{figure}

We provide the detailed results of the mass-spring dataset in Fig.~\ref{fig:spring}.
The leftmost column shows trajectories of state $\vu$, each of which forms a circle.
In the second left column, the true equation gives the system energy of the true state and predicted state.
Because the ground truth data was generated using a Runge--Kutta method (specifically, the adaptive Dormand--Prince method implemented in \textsf{solve\_imp} method of scipy library), the true system energy is drifting.
For all neural network models, the system energy was fluctuating over a wide range due to the modeling error.
Each of the remaining panels shows the system energy of the predicted state when a trained neural network model is considered as a true equation for 100 steps in the second right column and 10,000 steps in the rightmost column.
Using the Runge--Kutta method (specifically, the adaptive Dormand--Prince method implemented in \textsf{odeint} method of torchdiffeq library), the system energies of HNN\pp\ and DGNet are drifting.
Using the leapfrog integrator, the system energy of SRNN is fluctuating over the widest range.
The leapfrog integrator is a symplectic integrator and conserves the ``shadow'' Hamiltonian, which is an approximation to the true Hamiltonian~\cite{Hairer2006}.
The fluctuation makes it difficult to evaluate the energy efficiency of a system, and this is a main drawback of symplectic integrators.
Using the implicit scheme in Eq.~\eqref{eq:discrete_system}, the system energy of DGNet fluctuates within the narrowest range, demonstrating the superiority of the discrete gradient method.

We provide the detailed results of the real rendulum dataset in Fig.~\ref{fig:real2}.
The left column shows trajectories of state $\vu$.
Each trajectory forms a spiral except for the leapfrog integrator, which produces a circle without dissipation.
In the center column, the true equation gives the system energy of the true state and predicted state.
The true energy derived only from the angle and momentum of the pendulum bob does not monotonically decrease because it ignores the other components (e.g., the pendulum rod).
The right column shows the system energy of the predicted state when a trained neural network model is considered as a true equation.
The system energy estimated by HNN and DGNet dissipates monotonically (see green lines); these models build the alternative energy by implicitly estimating the other components.
For a real-world problem, we cannot always observe all states, and then, the ``true'' equation cannot describes the time evolution.
The data-driven modeling enables us to predict a partially observable state with the conservation and dissipation laws.

\begin{landscape}
    \begin{table}[t]
        \centering\small
        \caption{Detailed results on the PDE datasets corresponding to Table~\ref{tab:pde_score}.}\label{tab:pde_score_appendix}
        \begin{tabular}{lllllllll}
            \toprule
                                                   & \mcb{Integrator}               & \multicolumn{3}{c}{\textbf{KdV equation}} & \multicolumn{3}{c}{\textbf{Cahn--Hilliard equation}}                                                                                                                               \\
            \cmidrule(lr){2-3}\cmidrule(lr){4-6}\cmidrule(lr){7-9}
            \textbf{Model}                         & \textbf{Training}              & \textbf{Prediction}                       & \mca{Deriv.}                                         & \mca{Energy}            & \mca{Mass}              & \mca{Deriv.}          & \mca{Energy}           & \mca{Mass}             \\
            \midrule
            \multirow{2}{*}{NODE~\cite{Chen2018e}} & RK2                            & RK2                                       & \score{1.15}{0.01}{1}                                & \score{4.57}{3.62}{4}   & \score{2.86}{0.38}{0}   & \score{7.91}{0.03}{2} & \score{1.43}{0.05}{-2} & \score{9.15}{0.11}{-1} \\
                                                   & ada.~\!DP                      & ada.~\!DP                                 & \score{1.15}{0.01}{1}                                & \score{5.58}{6.56}{4}   & \score{2.84}{0.43}{0}   & \score{7.90}{0.03}{2} & \score{1.42}{0.05}{-2} & \score{9.14}{0.11}{-1} \\
            \midrule
            \multirow{2}{*}{HNN\pp$^*$}            & RK2                            & RK2                                       & \score{3.63}{4.14}{-2}                               & \score{6.32}{10.26}{-3} & \score{7.00}{10.80}{-4} & \score{3.44}{0.20}{2} & \score{1.33}{1.40}{-1} & \score{8.76}{5.07}{-2} \\
                                                   & ada.~\!DP                      & ada.~\!DP                                 & \score{2.33}{2.19}{-2}                               & \score{3.01}{4.81}{-3}  & \score{3.35}{4.92}{-4}  & \score{3.30}{0.75}{1} & \score{4.89}{1.74}{-6} & \score{7.95}{3.12}{-4} \\
            \midrule
                                                   & \upl{4mm}                      & RK2                                       & \upl{10mm}                                           & \score{1.84}{1.69}{-3}  & \score{2.78}{2.77}{-4}  & \upl{10mm}            & \score{6.61}{3.34}{0}  & \score{8.22}{2.04}{-1} \\
            DGNet                                  & Eq.~\eqref{eq:discrete_system} & ada.~\!DP                                 & \score{1.75}{1.00}{-2}                               & \score{1.60}{1.65}{-3}  & \score{2.54}{2.91}{-4}  & \score{7.14}{8.27}{0} & \score{3.39}{4.05}{-7} & \score{6.95}{7.64}{-5} \\
                                                   & \downl{4mm}                    & Eq.~\eqref{eq:discrete_system}            & \downl{10mm}                                         & \score{1.60}{1.65}{-3}  & \score{2.55}{3.00}{-4}  & \downl{10mm}          & \score{3.40}{4.10}{-7} & \score{6.96}{7.70}{-5} \\
            \bottomrule
        \end{tabular}
    \end{table}
    \begin{table}[t]
        \centering\small
        \caption{Detailed results on the ODE datasets corresponding to Table~\ref{tab:ode_score}.}\label{tab:ode_score_appendix}
        \begin{tabular}{lllllllll}
            \toprule
                                                      & \mcb{Integrator}               & \mcb{Mass--Spring}             & \mcb{Pendulum}         & \mcb{2-Body}                                                                                                                \\
            \cmidrule(lr){2-3}\cmidrule(lr){4-5}\cmidrule(lr){6-7}\cmidrule(lr){8-9}
            \textbf{Model}                            & \textbf{Training}              & \textbf{Prediction}            & \mca{Deriv.}           & \mca{Energy}           & \mca{Deriv.}           & \mca{Energy}           & \mca{Deriv.}           & \mca{Energy}            \\
            \midrule
            \multirow{2}{*}{NODE}                     & RK2                            & RK2                            & \score{5.27}{0.32}{-2} & \score{5.70}{1.53}{-1} & \score{5.67}{0.56}{-2} & \score{4.60}{0.76}{0}  & \score{2.08}{0.40}{-5} & \score{1.44}{1.96}{-1}  \\
                                                      & ada.~\!DP                      & ada.~\!DP                      & \score{5.57}{0.38}{-2} & \score{5.74}{1.51}{-1} & \score{5.54}{0.61}{-2} & \score{4.62}{0.76}{0}  & \score{2.07}{0.41}{-5} & \score{1.83}{3.05}{-1}  \\
            \midrule
            \multirow{2}{*}{HNN~\cite{Greydanus2019}} & RK2                            & RK2                            & \score{3.82}{0.09}{-2} & \score{6.13}{1.50}{-2} & \score{4.25}{0.24}{-2} & \score{4.04}{0.69}{-1} & \score{5.39}{2.65}{-6} & \score{9.39}{8.10}{-5}  \\
                                                      & ada.~\!DP                      & ada.~\!DP                      & \score{3.99}{0.09}{-2} & \score{1.74}{3.99}{-3} & \score{4.09}{0.29}{-2} & \score{1.66}{0.59}{-2} & \score{6.21}{4.65}{-6} & \score{8.18}{6.08}{-5}  \\
            \midrule
            SRNN~\cite{Chen2020a}                     & leapfrog                       & leapfrog                       & \score{3.95}{0.08}{-2} & \score{6.90}{8.08}{-4} & \score{3.92}{0.14}{-2} & \score{1.12}{0.67}{-2} & \score{4.36}{2.40}{-6} & \score{4.04}{3.22}{-5}  \\
            \midrule
                                                      & \upl{4mm}                      & RK2                            & \upl{10mm}             & \score{6.13}{0.74}{-2} & \upl{10mm}             & \score{7.43}{1.23}{-1} & \upl{10mm}             & \score{8.11}{10.91}{-5} \\
            DGNet                                     & Eq.~\eqref{eq:discrete_system} & ada.~\!DP                      & \score{3.85}{0.09}{-2} & \score{6.16}{4.28}{-4} & \score{3.93}{0.19}{-2} & \score{1.61}{1.16}{-2} & \score{7.80}{4.22}{-6} & \score{8.10}{10.91}{-5} \\
                                                      & \downl{4mm}                    & Eq.~\eqref{eq:discrete_system} & \downl{10mm}           & \score{6.17}{4.28}{-4} & \downl{10mm}           & \score{1.08}{0.91}{-2} & \downl{10mm}           & \score{8.10}{10.91}{-5} \\
            \bottomrule
        \end{tabular}
    \end{table}

\end{landscape}

\begin{table}[t]
    \centering\small
    \caption{Detailed results on the real pendulum dataset corresponding to Table~\ref{tab:ode_score}.}\label{tab:real_score_appendix}
    \begin{tabular}{lllll}
        \toprule
                                                  & \mcb{Integrator}               & \mcb{Real Pendulum}                                                              \\
        \cmidrule(lr){2-3}\cmidrule(lr){4-5}
        \textbf{Model}                            & \textbf{Training}              & \textbf{Prediction}            & \mca{Deriv.}           & \mca{Energy}           \\
        \midrule
        \multirow{2}{*}{NODE}                     & RK2                            & RK2                            & \score{1.38}{0.02}{-3} & \score{6.22}{4.32}{-4} \\
                                                  & ada.~\!DP                      & ada.~\!DP                      & \score{1.37}{0.02}{-3} & \score{5.88}{4.02}{-4} \\
        \midrule
        \multirow{2}{*}{HNN~\cite{Greydanus2019}} & RK2                            & RK2                            & \score{1.42}{0.22}{-3} & \score{2.86}{0.50}{-3} \\
                                                  & ada.~\!DP                      & ada.~\!DP                      & \score{1.41}{0.15}{-3} & \score{3.44}{1.71}{-3} \\
        \midrule
        SRNN~\cite{Chen2020a}                     & leapfrog                       & leapfrog                       & \score{1.38}{0.02}{-3} & \score{9.63}{0.49}{-3} \\
        \midrule
                                                  & \upl{4mm}                      & RK2                            & \upl{10mm}             & \score{8.63}{4.96}{-4} \\
        DGNet                                     & Eq.~\eqref{eq:discrete_system} & ada.~\!DP                      & \score{1.38}{0.06}{-3} & \score{4.92}{3.94}{-4} \\
                                                  & \downl{4mm}                    & Eq.~\eqref{eq:discrete_system} & \downl{10mm}           & \score{5.04}{3.99}{-4} \\
        \bottomrule
    \end{tabular}
\end{table}

\begin{table}[t]
    \caption{Details of the datasets used in~\cite{Greydanus2019}}
    \label{tab:dataset}
    \footnotesize\centering
    \begin{tabular}{lR{12mm}R{12mm}R{12mm}R{10mm}R{12mm}R{12mm}R{10mm}}
        \toprule
                         &                      & \mcc{Training/Test} & \mcc{Long-Term Prediction}                                                \\
        \cmidrule(lr){3-5}
        \cmidrule(lr){6-8}
        \textbf{Dataset} & \textbf{\#Iteration} & \#Traject.          & \#Observ.                  & Duration & \#Traject. & \#Observ. & Duration \\
        \midrule
        Mass-Spring      & 2,000                & 25/25               & 30                         & 3        & 15         & 100       & 20       \\
        Pendulum         & 2,000                & 25/25               & 45                         & 3        & 15         & 100       & 20       \\
        2-Body           & 10,000               & 800/200             & 50                         & 20       & 15         & 500       & 25       \\
        \bottomrule
    \end{tabular}
\end{table}

\end{document}